\theoremstyle{plain}
\newtheorem{thm}{Theorem}[section]
\newtheorem{lem}[thm]{Lemma}
\newtheorem{prop}[thm]{Proposition}
\newtheorem{cor}[thm]{Corollary}
\theoremstyle{definition}
\newtheorem{exmp}[thm]{Example}
\newcommand{\Rmnum}[1]{\expandafter\@slowromancap\romannumeral #1@}
\newcommand{\la}{\lambda}
\numberwithin{equation}{section} \errorcontextlines=0
\tiny\color{gray},
\begin{document}
\title{Irreducible characters of the generalized symmetric group}
\author{Huimin Gao}
\address{School of Mathematics, South China University of Technology, Guangzhou, Guangdong 510640, China}
\email{13144508207@163.com}
\author{Naihuan Jing}
\address{Department of Mathematics, North Carolina State University, Raleigh, NC 27695, USA}
\email{jing@ncsu.edu}

\maketitle

\begin{abstract}
The paper studies how to compute irreducible characters of the
generalized symmetric group $C_k\wr{S}_n$ by iterative algorithms. After reproving the Ariki-Koike version of
the Murnaghan-Nakayama rule by vertex algebraic methods, we formulate a new iterative formula for characters of the generalized symmetric group.
As applications, we find a numerical relation between the character values of $C_k\wr S_n$ and
modular characters of $S_{kn}$.
\end{abstract}

\section{Introduction}

The character theory of the symmetric group $S_n$ plays an important role in representation theory \cite{JK}. It is well known that the irreducible characters $\chi^\lambda$ of $S_n$ are indexed by partitions $\lambda$ of $n$,  
and explicit character values are given by
the celebrated Frobenius formula, which expresses the character values as the transition coefficients between the Schur symmetric functions and the power-sum symmetric functions \cite{JK, Mac}.
The character formula can also be formulated using a vertex algebraic method \cite{J1}, where
the irreducible character values of $S_n$ are expressed by matrix coefficients of Bernstein vertex operators \cite{Zel}
and products of Heisenberg operators.

The theory of irreducible characters of the wreath product $G\wr S_n$, where $G$ is any finite group, was given by Specht \cite{S}. Under the Frobenius-type charactersitic, the irreducible characters $\chi^{\boldsymbol{\lambda}}$ of $G\wr S_n$
correspond to the wreath product Schur symmetric functions \cite{Sta} labelled by partitions $\boldsymbol{\lambda}$ colored by the irreducible characters of $G$. The character values are then given by the transition coefficients between the wreath product Schur symmetric functions and
power-sum symmetric functions of partitions colored by the conjugacy classes of $G$.

Zelevinsky also studied the representations of $G\wr S_n$ using the language of PSH-algebras \cite{Zel}. The Grothendieck group of the category of finite-dimensional complex representations possesses an additional structure of Hopf algebra obeying positivity and self-adjointness axioms.

In \cite{FJW, IJS} Frenkel, Wang and the second named author reformulated the Specht
character theory using the vertex algebraic method in the context of the McKay correspondence. 
The character values are given by matrix coefficients of vertex operators of the form
$$\langle \prod p_{\rho_i}(c), \prod X_{\gamma}(\lambda_i)\rangle,$$
where $X_{\gamma}(\lambda_i)$ are the components of some vertex operators indexed by irreducible character $\lambda_i$ of $G$ and $p_{\rho_i}(c)$ are power-sum symmetric functions colored by the conjugacy classes of $G$.

It is generally believed that the character theory of {\it the generalized symmetric group} $C_p\wr S_n$, the wreath product of the cyclic group $C_p$ and $S_n$, is closely related to the modular character theory of the symmetric group $S_{n}$ over characteristic $p$ (cf. \cite{KOR}). All Weyl groups of classical type are essentially generalized symmetric groups. 
If $k=1$, the generalized symmetric group reduces to $S_n$; if $k=2$, it is specially called the hyperoctahedral group, which is isomorphic to the Weyl group in type $B$. A character identity relating irreducible character values of the hyperocta-hedral group $C_2\wr S_n$ and those of the symmetric group $S_{2n}$ was recently found \cite{LP, AR}. It is natural to study how to effectively
compute character values of the generalized symmetric group $C_k\wr S_n$. Pfeiffer has given programs to compute
the character tables of the Weyl groups in GAP \cite{P}.

The classical Murnaghan-Nakayama rule is a combinatorial rule for computing the irreducible character $\chi^\lambda$ on conjugacy class $\rho$ of the symmetric group $S_n$ \cite{Mur, Nak}. Recall that a {\it partition} $\lambda$ of $n$ is a decreasing sequence of positive integers $\lambda_i$ whose total sum is $n$, denoted by $\lambda \vdash n$. The number of $\lambda_i$ is the length of $\la$, denoted by $l(\la)$. If the parts $\lambda_i$ are not ordered, then we call $\lambda$ a {\it composition} of $n$.


There are several generalizations of the classical Murnaghan-Nakayama rule. Osima \cite{O} had given a Murnaghan-Nakayama rule in terms of skew representations of the generalized symmetric groups, and Stembridge \cite{St} rediscovered the rule and studied associated combinatorial formulas. This version of the Murnaghan-Nakayama rule requires information of restricting
an irreducible representation of $C_k\wr S_n$ to $C_k\wr S_{n-m}$, so it needs extra work for computer programming. 
Finally Ariki and Koike \cite{AK} found the Murnaghan-Nakayama rule for
the generalized symmetric group that naturally generalizes the classical one and independent of a priori information of the restriction functor. There are also generalizations of the Murnaghan-Nakayama rule for other related structures \cite{
MRW, BSZ, EPW, HR, JL}. 

In this paper, we study the Murnaghan-Nakayama rule for the generalized symmetric groups $C_k\wr S_n$ aiming to compute the characters effectively. 
We first reformulate the Ariki-Koike version of the
Murnaghan-Nakayama rule using 
the vertex algebraic method. Our treatment relies upon the vertex operator realization of Schur functions \cite{J2, J3} 
to derive the iterative formula for the irreducible characters, which is perhaps a quicker derivation of the rule. Another benefit of the vertex algebraic method is that it also gives a {\it new 
iterative rule} or the dual rule for
the generalized symmetric group by reducing the partition of the irreducible representation by rows. 

Our reformulation of the Murnaghan-Nakayama rule is the following result, which is obtained by splitting the operators corresponding to conjugacy classes. This version gives an iterative formula to compute the irreducible characters that may simplify computational complexity.
\begin{thm}
		Given colored partitions $\boldsymbol{\lambda}=(\lambda^{(0)},\lambda^{(1)},\cdots,\lambda^{(k-1)})$ and $\boldsymbol{\rho}=(\rho^{(0)},\rho^{(1)},\cdots,\rho^{(k-1)})$ of $n$,
 where $\rho^{(s)}=(\rho^{(s)}_1,\cdots,\rho^{(s)}_m,\cdots,\rho^{(s)}_{l(s)})$. Then the value of the irreducible character $\chi^{\boldsymbol{\lambda}}$ of $C_k\wr S_n$ at the class $\boldsymbol{\rho}$ is given by
	\begin{align}
		\chi^{\boldsymbol{\lambda}}_{\boldsymbol{\rho}}=\sum_{j=0}^{k-1}\sum_{\boldsymbol{\xi_j}}(-1)^{\mathrm{ht}(\boldsymbol{\lambda_j})}\omega^{-sj}\chi^{\boldsymbol{\lambda}\backslash\boldsymbol{\xi_j}}_{\boldsymbol{\rho}\setminus\rho^{(s)}_m},
	\end{align}
where ${\boldsymbol{\xi_j}}$ runs through all colored $\rho^{(s)}_m$-rim hooks contained in ${\boldsymbol{\lambda}}$ that are supported at the j-th constituent.
\end{thm}

Moreover, we can also break down the irreducible characters into lower rank ones using a dual procedure, which leads to a new iterative rule for the generalized symmetric group. 
Our rule at $k=1$ seems to be a
new combinatorial formula for the irreducible character values of the symmetric group $S_n$ as well.
\begin{thm} (New iterative rule)
	Given colored partitions $\boldsymbol{\lambda}=(\lambda^{(0)},\lambda^{(1)},\\
 \cdots,\lambda^{(k-1)})$ and $\boldsymbol{\rho}=(\rho^{(0)},\rho^{(1)},\cdots,\rho^{(k-1)})$ of $n$. 
 For any fixed color $j$, the irreducible character value of $C_k\wr S_n$ is given by
    \begin{align}
    	\chi^{\boldsymbol{\lambda}}_{\boldsymbol{\rho}}=\sum_{\substack{\boldsymbol{\mu}\vartriangleleft\boldsymbol{\rho},\|\boldsymbol{\mu}\|\geq\lambda^{(j)}_1\\\boldsymbol{\tau}\vdash \|\boldsymbol{\mu}\|-\lambda^{(j)}_1}}\omega^{(\eta(\boldsymbol{\tau})-\eta(\boldsymbol{\mu}))j}\frac{{(-1)}^{l(\boldsymbol{\tau})}}{k^{l(\boldsymbol{\tau})}z_{\tau}}\chi^{\boldsymbol{\lambda}\setminus \lambda^{(j)}_1}_{(\boldsymbol{\rho}\backslash\boldsymbol{\mu})\cup\boldsymbol{\tau}}
    \end{align}
summed over colored partitions $\boldsymbol{\mu}$ and $\boldsymbol{\tau}$ such that $\boldsymbol{\mu}\vartriangleleft\boldsymbol{\rho}$ with weight bigger than $\lambda^{(j)}_1$ and $\boldsymbol{\tau}$ of weight $\|\boldsymbol{\mu}\|-\lambda^{(j)}_1$. Here $z_{\tau}$ is defined below \eqref{e:innerprod}, $\tau$ is the rearrangement of $\boldsymbol{\tau}$, and $\eta(\boldsymbol{\tau})$, $\eta(\boldsymbol{\mu})$ are the weighted lengths
$\sum_iil(\tau^{(i)})$, $\sum_i il(\mu^{(i)})$ respectively.
\end{thm}

We point it out that the dual Murnaghan-Nakayama rule for
the generalized symmetric group gives a different iteration
procedure. The former one given by Ariki and Koike simplifies the indexing colored-partition by removing all rim-hooks in each constituent, while our new rule simply removes
the largest part in a fixed constituent. This reveals a different relation between representations of the generalized symmetric groups.

We also show how to use the Sagemath program \cite{Zim} to implement the algorithm. In theory, any irreducible character value of the generalized symmetric group can be iterated once according to our program pending on computer's CPU. The source code for this program can be found in the appendix. As applications, we also find a numerical relation between the irreducible character values
of $C_k \wr S_n$ and those of modular $S_{kn}$ mod $k$.

The paper is organized into three parts. In section 2, we recall the basic notions about generalized symmetric groups $C_k\wr S_n$ and the construction of the irreducible character value \cite{FJW}. In section 3, we derive the analog of the Murnaghan-Nakayama rule and another iterative formula for the generalized symmetric groups, using the technique of vertex operators. The dual Maunaghan-Nakayama rule for the symmetric group is also obtained as a consequence. In section 4, we discuss some examples and properties of irreducible character values based on our main results. In the end, we formulate a numerical relation between
the irreducible character values of $C_k\wr S_n$ and those of modular $S_{kn}$. The character tables of $C_3\wr S_1$, $C_3\wr S_2$, $C_3\wr S_3$ are attached in the end.

\section{Irreducible characters of $C_k\wr S_n$}

The generalized symmetric group $C_k\wr S_n$ is the wreath product of the cyclic group $C_k$ of order $k$ with the symmetric group $S_n$ of $n$ elements,
i.e. the semidirect product $C_k^{n}\rtimes S_n$, where the symmetric group $S_n$ acts on the direct product $C_k^{n}=C_k\times \cdots \times C_k$ by permutating the factors. The multiplication is given by
\begin{align}
	(g;\sigma)\cdot (h;\tau)=(g\sigma(h);\sigma\tau),
\end{align}
where $g,h\in C_k^{n}$ and $\sigma,\tau\in S_n$.

Since $C_k$ is an abelian group of order $k$, there are exactly $k$ conjugacy classes, given by
$\{1\},\{c\},\cdots,\{c^{k-1}\}$, where $C_k=\langle c \rangle$.
The order of the centralizer of each conjugacy class is $k$. Its irreducible characters are given by
$\gamma_i$, $i=0, \ldots, k-1$, where
$\gamma_i(c^j)=\omega^{ij}$, and $\omega=e^{2\pi i/k}$ is the $k$-th primitive root of unity.
The space of class functions on $C_k$ is given by
\begin{align*}
	R(C_k)=\bigoplus_{i=0}^{k-1}\mathbb{C}\gamma_i.
\end{align*}

The conjugacy classes of $C_k\wr S_n$ are parametrized by colored partitions. The color set is $I=\{0,1,\cdots,k-1\}$, the indexing set of
conjugacy classes. The conjugacy class of an element $x=(g;\sigma)\in C_k\wr S_n$ corresponds to {\it the $I$-colored partition} $\boldsymbol{\lambda}=(\lambda^{(i)})$, which consists of $|I|$ partitions $\lambda^{(i)}$ such that $\Arrowvert\boldsymbol{\lambda}\Arrowvert=\sum_{i=0}^{k-1}|\lambda^{(i)}|=n$, the $i$th partition $\lambda^{(i)}$  given by
\begin{align*}
	\lambda^{(i)}=(1^{m_1(c^{i})}2^{m_2(c^{i})}\cdots),
\end{align*}
where $m_j(c^i)$ equals to the number of $j$-cycles in $\sigma$ whose cycle product is $c^i$. We also say that the colored partition $\boldsymbol{\lambda}=(\lambda^{(i)})$ is supported at its constituent subpartitions $\lambda^{(i)}$. In particular, a colored partition $\boldsymbol{\lambda}$ supported at only one color has one nontrivial constituent subpartition, and all other constituents are empty. 

Given a partition $\la=(\la_1, \la_2, \ldots, \la_l)$, one can paint the parts with the colors $\{0, 1, \ldots, k-1\}$, to get $k^{l(\la)}$ colored partitions $\boldsymbol{\la}$, which are referred as the colored partitions arising from $\la$. On the other hand, all these $k^{l(\la)}$ colored partitions become to the partition $\la$ by erasing their colors.
If $\boldsymbol{\lambda}=(\lambda^{(i)})$ is a colored partition, we define the {\it weighted length} $\eta(\boldsymbol{\lambda})$ as $\sum_i il(\lambda^{(i)})$.

We also call $\boldsymbol{\lambda}$ the
type of the conjugacy class. It is known that two elements are conjugate in $C_k\wr S_n$ if and only if they have the same type.

A {\it hook} is a special partition of the form $(a, 1^b)$, where $a-1$ is the length of its arm and $b$ is the length of its leg. The hook length is defined as $a+b$. For a general partition $\lambda$, its hook length at $x=(i, j)\in \lambda$, of the Young diagram, is defined by
$$h(x)=\lambda_{i}+\lambda_j'-i-j+1,$$
and the hook length of $\lambda$ is defined as the product of $h(x)$ for $x\in \lambda$, where $\lambda'$ is the conjugate of $\lambda$.

\begin{exmp}
	In $C_3\wr S_7$,
	\begin{align*}
		x=((1,1,1,\omega^1,\omega^1,\omega^2,1);(1,2,3)(4,5)(6,7));\\
		y=((1,\omega^2,\omega^1,1,1,1,\omega^1);(1,4,5)(2,6)(3,7))
	\end{align*}
	 are conjuagate, since they have the same type $((3),(\phi),(2,2))$.
\end{exmp}

Irreducible characters of the generalized symmetric group $C_k\wr S_n$ were determined by Specht \cite{S}. Frenkel, Wang, and the second author have
given a vertex algebraic method to express the irreducible character values by matrix coefficients of certain vertex operators
\cite{FJW}, which we now recall in the following.

Let $\{a_n(\gamma_i)|n\in \mathbb{Z},i\in I\}\cup\{c\}$ be the set of generators of the Heisenberg algebra $\mathcal{H}$ with defining relations
\begin{align}
	[a_n(\gamma_i),a_m(\gamma_j)]=n\delta_{n,-m}\delta_{i,j}c,
\end{align}	
\begin{align}
    [a_n(\gamma_i),c]=0.
\end{align}

Consider the Fock space $V=\mathrm{Sym}(a_{-n}(\gamma_i)'s)$, the polynomial algebra on the variables $a_{-n}(\gamma_i)$,$n\in \mathbb{N}$ and $i\in I$. The algebra $\mathcal{H}$ acts on $V$ by the following rule:
\begin{align}
	&a_{-n}(\gamma_i).v=a_{-n}(\gamma_i)v,\\
	&c.v=v, \\ \notag
    &a_n(\gamma_i).a_{-n_1}(\alpha_1)a_{-n_2}(\alpha_2)\cdots a_{-n_k}(\alpha_k)\\ \label{e:2.6}
	&=\sum_{j=1}^{k}\delta_{n,n_j}\langle \gamma_i,\alpha_j\rangle a_{-n_1}(\alpha_1)a_{-n_2}(\alpha_2)\cdots a_{-n_k}(\alpha_k)
\end{align}
where $\alpha_j\in R(C_k)$, $\langle \gamma_i,\gamma_j\rangle =\delta_{i,j}$ and $\langle \gamma_i,\alpha_j\rangle$ is extended the whole space by linearity.

For convenience, we denote $a_n(\gamma_i):=a_{in}$, where the first index $i$ stands for $\gamma_i$. The space $V$ is equipped with a bilinear form given by
\begin{align}
	\langle 1,1\rangle =1, \quad a_{in}^*=a_{-in},
\end{align}	
where $a_{in}^*$ denotes the adjoint of $a_{in}$.
We denote by $A_i(z)$ and $A_i^*(z)$  the generating series:
\begin{align}
	A_i(z)=\sum_{n\in \mathbb{N}}a_{-in}z^n, \quad A_i^*(z)=\sum_{n\in \mathbb{N}}a_{in}z^{-n}.
\end{align}

Moreover, for a partition $\lambda=(\lambda_1,\lambda_2,\cdots \lambda_l)$, we define
\begin{align}\label{e:2.9}
	a_{-i\lambda}=a_{-i\lambda_1}a_{-i\lambda_2}\cdots a_{-i\lambda_l},
\end{align}
which form an orthogonal basis:
\begin{align}\label{e:innerprod}
	\langle a_{-i\lambda}, a_{-j\mu}\rangle=\delta_{i,j}\delta_{\lambda \mu}z_\lambda,
\end{align}
where $z_\lambda=\prod_i i^{m_i}m_i!$ for $(1^{m_1}2^{m_2}\cdots)$, the exponential notation of $\lambda$.

In fact, $\mathcal{H}$ has another set of generators consisting of the Fourier transform of the first set:
$\{\widetilde{a}_{in}|n\in \mathbb{Z},i\in I\}\cup\{c\}$, where
\begin{align}\label{e:2.11}
	\widetilde{a}_{in}=\sum_{j=0}^{k-1}\omega^{-ij}a_{jn}.
\end{align}	
Under the inverse Fourier transform, we have that
\begin{align}
	a_{in}=\frac{1}{k}\sum_{j=0}^{k-1}\omega^{ij}\widetilde{a}_{jn}.
\end{align}

Similarly, for a partition $\lambda=(\lambda_1, \lambda_2, \ldots, \lambda_l)$ we denote
\begin{align}
	\widetilde{a}_{-i\lambda}=\widetilde{a}_{-i\lambda_1}\widetilde{a}_{-i\lambda_2}\cdots \widetilde{a}_{-i\lambda_l}.
\end{align}

For a colored partition $\boldsymbol{\lambda}=(\lambda^{(0)},\lambda^{(1)},\cdots,\lambda^{(k-1)})$, let
\begin{align}
	\widetilde{a}_{-\boldsymbol{\lambda}}=\widetilde{a}_{-0\lambda^{(0)}}\widetilde{a}_{-1\lambda^{(1)}}\cdots \widetilde{a}_{-(k-1)\lambda^{(k-1)}}.
\end{align}
Then $ \widetilde{a}_{-\boldsymbol{\lambda}}$ form another basis of $\mathcal{H}$. 

We introduce the vertex operator $X_i(z)$ and its adjoint $X_i^*(z)$ as the linear maps: $V\longrightarrow V[[z, z^{-1}]]$ given by
\begin{align}
	X_i(z)&=\mbox{exp} \left( \sum\limits_{n\geq 1} \frac{1}{n}a_{-in}z^{n} \right) \mbox{exp} \left( -\sum \limits_{n\geq 1} \frac{1}{n}a_{in}z^{-n} \right)\\ \notag
	&=\sum_{n\in\mathbb Z}X_{in}z^n,\\
	X_i^*(z)&=\mbox{exp} \left( -\sum\limits_{n\geq 1} \frac{1}{n}a_{-in}z^{n} \right) \mbox{exp} \left( \sum \limits_{n\geq 1} \frac{1}{n}a_{in}z^{-n} \right)\\ \notag
	&=\sum_{n\in\mathbb Z}X_{in}^*z^{-n}.
\end{align}

In fact, for a partition $\lambda=(\lambda_1,\lambda_2,\cdots, \lambda_l)$, it is known that
\begin{align}
	s_{i\lambda}=X_{i\lambda_1}X_{i\lambda_2}\cdots X_{i\lambda_l}.1
\end{align}	
is the Schur function associated to the parition $\lambda$ in the variables $a_{-in}$ (as a power-sum symmetric function) \cite{J2}. 
Then we have
\begin{align}
	\langle s_{i\lambda}, s_{i\mu}\rangle=\delta_{\lambda \mu}.
\end{align}

For a colored partition $\boldsymbol{\lambda}=(\lambda^{(0)},\lambda^{(1)},\cdots,\lambda^{(k-1)})$, we denote
	\begin{align}
		s_{\boldsymbol{\lambda}}=s_{0\lambda^{(0)}}s_{1\lambda^{(1)}}\cdots s_{(k-1)\lambda^{({k-1})}}.
\end{align}	
We recall the following vertex algebraic formulation of Specht characters:
\begin{thm} \cite{FJW} \label{t:FJW}
	Given colored partitions $\boldsymbol{\lambda}=(\lambda^{(0)},\lambda^{(1)},\cdots,\lambda^{(k-1)})$ and $\boldsymbol{\rho}=(\rho^{(0)},\rho^{(1)},\cdots,\rho^{(k-1)})$ of $n$, the matrix coefficient
	\begin{align*}
		\langle s_{\boldsymbol{\lambda}}, \widetilde{a}_{-\boldsymbol{\rho}} \rangle
	\end{align*}
is equal to the value of the irreducible character $\chi^{\boldsymbol{\lambda}}$ at the conjugacy class of type $\boldsymbol{\rho}$ in $C_k\wr S_n$.
\end{thm}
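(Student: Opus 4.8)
The plan is to prove the identity through the \emph{Frobenius characteristic map} for wreath products, showing that under this map the two entries of the pairing correspond respectively to the irreducible characters and to the (suitably normalized) conjugacy-class indicator functions, so that the matrix coefficient is converted into an ordinary character inner product.

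First I would set up the characteristic map $ch\colon \bigoplus_{n\ge 0} R(C_k\wr S_n)\to V$, where each $R(C_k\wr S_n)$ is the space of class functions. Following the McKay-correspondence construction of \cite{FJW}, $ch$ sends a class function to a polynomial in the $a_{-in}$, and the structural fact to record is that $ch$ is an \emph{isometry}: the character inner product $\langle f,g\rangle_{C_k\wr S_n}=\tfrac{1}{|C_k\wr S_n|}\sum_x f(x)\overline{g(x)}$ is carried to the bilinear form on $V$ normalized by $\langle 1,1\rangle=1$ and $a_{in}^*=a_{-in}$. Granting this, the whole statement reduces to identifying the $ch$-images of the two distinguished bases.

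Then I would identify those images. On one side, Specht's classification \cite{S} together with the vertex-operator realization $s_{i\lambda}=X_{i\lambda_1}\cdots X_{i\lambda_l}.1$ of \cite{J2} gives $ch(\chi^{\boldsymbol{\lambda}})=s_{\boldsymbol{\lambda}}$, the colored Schur function built in the variables $a_{-in}$ of ``color'' $\gamma_i$. On the other side I would show that the conjugacy-class indicator $\mathbf{1}_{\boldsymbol{\rho}}$, scaled by the centralizer order $Z_{\boldsymbol{\rho}}=\prod_i z_{\rho^{(i)}}\,k^{\,l(\rho^{(i)})}$, satisfies $ch(Z_{\boldsymbol{\rho}}\,\mathbf{1}_{\boldsymbol{\rho}})=\widetilde{a}_{-\boldsymbol{\rho}}$. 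This is exactly where the Fourier transform \eqref{e:2.11} enters: the generators $a_{-in}$ are the power sums labelled by the \emph{irreducible characters} $\gamma_i$ of $C_k$, whereas the type of a conjugacy class records the cycle products $c^i$, i.e.\ the \emph{conjugacy classes} of $C_k$; the twist $\widetilde{a}_{in}=\sum_j\omega^{-ij}a_{jn}$ is precisely the change of basis implementing $\gamma_i(c^j)=\omega^{ij}$, so that $\widetilde{a}_{-\boldsymbol{\rho}}$ is the class power sum attached to the colored partition $\boldsymbol{\rho}$, with normalization consistent with \eqref{e:innerprod}.

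Finally I would combine these. Using the isometry and the two identifications,
\begin{align*}
\langle s_{\boldsymbol{\lambda}},\, \widetilde{a}_{-\boldsymbol{\rho}}\rangle
=\langle ch(\chi^{\boldsymbol{\lambda}}),\, ch(Z_{\boldsymbol{\rho}}\mathbf{1}_{\boldsymbol{\rho}})\rangle
=Z_{\boldsymbol{\rho}}\,\langle \chi^{\boldsymbol{\lambda}}, \mathbf{1}_{\boldsymbol{\rho}}\rangle_{C_k\wr S_n}
=\chi^{\boldsymbol{\lambda}}_{\boldsymbol{\rho}},
\end{align*}
the last equality holding because $\mathbf{1}_{\boldsymbol{\rho}}$ is supported on the single class of type $\boldsymbol{\rho}$, whose centralizer has order $Z_{\boldsymbol{\rho}}$. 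I expect the main obstacle to be the middle part: verifying that $ch$ is genuinely an isometry and, inseparably, that the $\omega$-twisted generators $\widetilde{a}_{-\boldsymbol{\rho}}$ match the class power sums with the \emph{correct} normalization. This requires careful bookkeeping of the factors $z_{\rho^{(i)}}$, the powers $k^{\,l(\rho^{(i)})}$ coming from the $C_k$-part of the centralizer, and the complex conjugations in the Fourier transform (which make color $i$ pair with color $k-i$ under the bilinear form); all of these must cancel coherently for the pairing to land exactly on $\chi^{\boldsymbol{\lambda}}_{\boldsymbol{\rho}}$.
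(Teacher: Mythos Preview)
The paper does not actually prove this theorem: it is stated with the citation \cite{FJW} and no argument is given, since it is being \emph{recalled} from Frenkel--Jing--Wang as background for the new results in Section~3. So there is no ``paper's own proof'' to compare against.

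That said, your sketch is the right one and is essentially what underlies \cite{FJW} (and the closely related exposition in \cite{IJS}): one builds the wreath-product characteristic map, checks it is an isometry, and identifies $s_{\boldsymbol\lambda}$ with $ch(\chi^{\boldsymbol\lambda})$ and $\widetilde a_{-\boldsymbol\rho}$ with the class-sum power symmetric function. Your anticipated obstacle---tracking the $z_{\rho^{(i)}}$, the $k^{l(\rho^{(i)})}$ factors, and the Fourier twist---is exactly where the work lies, and your handling of it is accurate. If anything, you could streamline the verification of $ch(Z_{\boldsymbol\rho}\mathbf 1_{\boldsymbol\rho})=\widetilde a_{-\boldsymbol\rho}$ by first checking the single-cycle case (one part of one color) and then appealing to multiplicativity of $ch$ under induction from Young subgroups, rather than doing the full bookkeeping at once.
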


\section{Two Iterative Formulas}

Using techniques of vertex operators, we have the following commutation relations for the vertex operators (cf. \cite{J3}):
\begin{align}\label{e:com1a}
	zX_i(z)X_i(w)&+wX_i(w)X_i(z)=0, \\ \label{e:com2a}
	A_i^*(z)X_i(w)&=X_i(w)A_i^*(z)+\frac{w}{z-w}X_i(w),\\
 \label{e:com3a}
	X_i^*(z)A_i(w)&=A_i(w)X_i^*(z)+\frac{w}{z-w}X_i^*(z).
\end{align}
Taking the coefficients of $z^mw^n$, we derive the following commutation relations among the components.
\begin{prop}\label{P:3.1}
	For $m,n\in \mathbb{N}$,$i\in I$, we have
	\begin{align}\label{e:com1}
		X_{im}X_{in}&=-X_{i(n-1)}X_{i(m+1)}, \\ \label{e:com2}
    	X_{im}^*.1&=0, \quad a_{-im}^*.1=0, \\ \label{e:com3}
	    a_{-im}^*X_{in}&=X_{in}a_{-im}^*+X_{i(n-m)},\\ \label{e:com4}
    	X_{im}^*a_{-in}&=a_{-in}X_{im}^*+X_{i(m-n)}^*.
	 \end{align}	
\end{prop}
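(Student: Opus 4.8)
The plan is to extract the four component relations \eqref{e:com1}--\eqref{e:com4} directly from the three generating-function identities \eqref{e:com1a}--\eqref{e:com3a} by comparing coefficients of monomials $z^m w^n$, after expanding the rational kernels $\frac{w}{z-w}$ as formal power series. The only genuine subtlety is choosing the correct expansion region for $\frac{w}{z-w}$ so that the resulting series matches the mode expansions of the operators on each side; I expect this bookkeeping to be the main obstacle, since an incorrect expansion produces the wrong index shifts.

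First I would treat \eqref{e:com1}. Writing $X_i(z)=\sum_m X_{im}z^m$ and $X_i(w)=\sum_n X_{in}w^n$, I substitute into \eqref{e:com1a} to get
\begin{align*}
\sum_{m,n}X_{im}X_{in}z^{m+1}w^n+\sum_{m,n}X_{in}X_{im}w^{n+1}z^m=0.
\end{align*}
Reindexing the second sum (sending $n\mapsto n+1$ in the exponent of $w$ and $m\mapsto m$, i.e. matching the power $z^{m'}w^{n'}$) and equating the coefficient of $z^{m}w^{n}$ to zero yields $X_{i(m-1)}X_{in}+X_{i(n-1)}X_{im}=0$; a relabeling $m\mapsto m+1$ then gives \eqref{e:com1}. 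The vacuum relations \eqref{e:com2} follow from the definitions: in $X_i^*(z)$ the annihilation exponential $\exp(\sum \frac1n a_{in}z^{-n})$ acts as the identity on $1$ while the creation exponential has no constant term producing a nonnegative power, so picking out the coefficient of $z^{-m}$ with $m\geq 1$ gives $X^*_{im}.1=0$; and $a_{-im}^*.1 = a_{im}.1 = 0$ is immediate from \eqref{e:2.6} since there is nothing to contract against.

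Next I would handle \eqref{e:com3} and \eqref{e:com4}, which are the heart of the statement. For \eqref{e:com3} I expand \eqref{e:com2a}: with $A_i^*(z)=\sum_{n\geq1}a_{in}z^{-n}$, the adjoint relation $a_{-im}^*=a_{im}$ lets me read the modes of $A_i^*(z)$ as the operators $a_{-im}^*$. Expanding $\frac{w}{z-w}=\sum_{\ell\geq1}w^{\ell}z^{-\ell}$ (the expansion valid for $|z|>|w|$, matching $A_i^*(z)$ acting from the left) and comparing the coefficient of $z^{-m}w^{n}$ on both sides produces $a_{-im}^*X_{in}=X_{in}a_{-im}^*+X_{i(n-m)}$, which is exactly \eqref{e:com3}. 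Relation \eqref{e:com4} comes from the same computation applied to \eqref{e:com3a}, now reading modes of $X_i^*(z)$ against $A_i(w)=\sum_{n\geq1}a_{-in}w^n$ and extracting the coefficient of $z^{-m}w^{n}$; the kernel $\frac{w}{z-w}$ again contributes the single shifted term $X^*_{i(m-n)}$.

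The routine part is the reindexing arithmetic; the part demanding care is verifying in each case that the chosen geometric expansion of $\frac{w}{z-w}$ is consistent with the normal-ordering convention implicit in \eqref{e:com2a}--\eqref{e:com3a}, so that no extra contact terms are dropped or doubled. Once the expansions are fixed consistently, all four identities fall out by matching a single monomial coefficient, and I would close by remarking that the relations hold as identities of operators on the Fock space $V$ because both sides are finite sums of well-defined modes in each graded component.
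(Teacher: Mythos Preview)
Your proposal is correct and follows exactly the approach the paper indicates: the paper simply says ``Taking the coefficients of $z^mw^n$, we derive the following commutation relations among the components,'' and your write-up carries out that coefficient extraction explicitly for each of \eqref{e:com1}--\eqref{e:com4}. The only quibble is the wording around \eqref{e:com2}---the creation exponential in $X_i^*(z)$ does have a constant term (namely $1$), so the point is that $X_i^*(z).1$ involves only nonnegative powers of $z$, whence the coefficient of $z^{-m}$ vanishes for $m\geq 1$---but your conclusion is right.
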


Recall that for two partitions $\mu \subset\lambda$, if the skew partition $\lambda-\mu$ is connected and contains no $2*2$ boxes, we call $\lambda-\mu$ a {\it rim hook} or {\it border strip}. The height of $\lambda-\mu$ is defined to be one less than the number of rows occupied, denoted by $\mathrm{ht}(\lambda-\mu)$ or $\mathrm{ht}(\lambda/\mu)$. For example, take $\lambda=(5,3,2,2), \mu=(2,1,1)$, then $\lambda-\mu$ can be depicted as
\\
\begin{center}
	\(\young(::~~~,:~~,:~,~~)\)
\end{center}
~\\
which is a $8$-rim hook of $\lambda$ with height 3.
Note that a hook is a special case of the rim hook.

If $\xi$ is a rim hook contained in a Young diagram $\lambda$, then $\lambda\setminus\xi$ denotes the {\it subdiagram} of $\lambda$ obtained by removing $\xi$. It is clear that $\lambda\setminus\xi$ is also a Young diagram, so we also use it refer to the associated partition. In the above example, $\xi=(5, 3, 2, 2)-(2, 1, 1)$ is the rim hook, and the difference partition is
\begin{equation}\label{e:minus}
\lambda\setminus\xi=(2, 1, 1).
\end{equation}

A colored partition $\boldsymbol{\la}=(\la^{(i)})$ is contained in a colored partition $\boldsymbol{\mu}=(\mu^{(i)})$, denoted by
$\boldsymbol{\la}\subset\boldsymbol{\mu}$, if each
constituent partition $\la^{(i)}\subset\mu^{(i)}$.
The difference $\boldsymbol{\xi}=\boldsymbol{\la}-\boldsymbol{\mu}$ is called
a rim-hook if one of the constituents $\la^{(i)}-\mu^{(i)}$
is a rim-hook and all other constituents are identical, i.e., the colored rim-hook $\boldsymbol{\xi}$ is
only supported at the $i$th constituent.

For an integral $l$-tuple $\mu=(\mu_1, \ldots, \mu_l)$, we define the lowering operator $T_n$ ($1\leq n\leq l-1$) by
	 \begin{align*}
	 	T_n: \quad (\mu_1,\cdots \mu_n,\mu_{n+1},\cdots \mu_l) \mapsto (\mu_1,\cdots \mu_{n+1}-1,\mu_n+1,\cdots \mu_l).
	 \end{align*}
Now we give a characterization of $m$-rim hook $\eta$ by using lowering operators. Let $\xi=(\lambda_1,\cdots,\lambda_{i-1},\lambda_{i}-m,\lambda_{i+1},\cdots,\lambda_l)$  be the integral $l$-tuple obtained by the action \eqref{e:com3} of vertex operators.

\begin{lem}\label{l:mstrip}  Any $m$-rim hook $\eta$ can be written as a difference of a partition $\la$ and $\mathrm{ht}(\eta)$ applications of $T_{i}$ to the integral $l$-tuple $(\la_1, \ldots, \la_{i-1}, \la_i-m, \la_{i+1}, \ldots,
\la_l)$.
\end{lem}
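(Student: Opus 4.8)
The plan is to translate the statement into the language of \emph{contents} (equivalently first-column hook lengths, or beta-numbers) and to recognize the straightening produced by the operators $T_i$ as nothing more than re-sorting a beta-set after a single decrement. To a partition $\lambda=(\lambda_1,\dots,\lambda_l)$ I attach the strictly decreasing integer sequence $c_j=\lambda_j-j$ for $1\le j\le l$. The first step is a one-line computation: after applying $T_j$ to an $l$-tuple $\mu$, position $j$ holds $\mu_{j+1}-1$ and position $j+1$ holds $\mu_j+1$, so the new contents at these two slots are
\[
\bigl((\mu_{j+1}-1)-j,\ (\mu_j+1)-(j+1)\bigr)=\bigl(\mu_{j+1}-(j+1),\ \mu_j-j\bigr).
\]
Thus $T_j$ acts on the content sequence simply by the adjacent transposition $c_j\leftrightarrow c_{j+1}$; by the commutation relation \eqref{e:com1} each such application carries a sign $-1$, and the associated vertex-operator monomial vanishes exactly when two contents coincide.

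Next I identify the input tuple with a beta-set decrement. The integral $l$-tuple $\xi=(\lambda_1,\dots,\lambda_{i-1},\lambda_i-m,\lambda_{i+1},\dots,\lambda_l)$ produced by the action \eqref{e:com3} has content sequence obtained from $(c_1,\dots,c_l)$ by lowering the single entry $c_i$ to $c_i-m$, leaving all others fixed. Sorting this perturbed sequence back into strictly decreasing order — which is precisely what a succession of the $T$'s accomplishes — is exactly the classical beta-number description of removing an $m$-rim hook: the hook is removable if and only if $c_i-m$ collides with none of the remaining contents, and the re-sorted sequence reads off the partition $\lambda\setminus\eta$. Because $c_1>\cdots>c_{i-1}>c_i-m$ already holds, the displaced value $c_i-m$ need only be bubbled rightward past those contents $c_j$ with $j>i$ and $c_j>c_i-m$, so the number of $T$-applications equals the number of such contents.

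It remains to match this transposition count with $ht(\eta)$. Here I invoke the standard beta-number (abacus) picture of rim-hook removal: decreasing a single beta-number by $m$ and re-sorting corresponds to sliding the corresponding bead, and the number of beads jumped over is exactly the leg length of the removed rim hook, namely one less than the number of rows it occupies, i.e. $ht(\eta)$. Combined with the sign bookkeeping above, this shows that straightening $\xi$ requires exactly $ht(\eta)$ applications of $T_i,T_{i+1},\dots$, each contributing $-1$, and yields $\lambda\setminus\eta$; equivalently, $\eta$ is recovered as the difference between $\lambda$ and the partition obtained by these $ht(\eta)$ lowering moves. I would verify the conventions on the running example $\lambda=(5,3,2,2)$, $\eta=(5,3,2,2)-(2,1,1)$, whose contents $(4,1,-1,-2)$ become $(-4,1,-1,-2)$ after removing $m=8$ from the first row and require three transpositions to re-sort, matching $ht(\eta)=3$.

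The main obstacle is the final identification of the swap count with the height. Everything else is formal once $T_j$ is seen to transpose contents, but pinning down that the number of contents jumped over equals the leg length requires carefully setting up the bijection between removable $m$-rim hooks and admissible single decrements of the beta-set, together with checking that the vanishing of $s_\xi$ (coincident contents) corresponds precisely to a non-removable hook. I expect the sign and indexing conventions — in particular reconciling the direction of $T_n$ in its definition with the relation \eqref{e:com1} — to be the delicate points that must be stated with care.
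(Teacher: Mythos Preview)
Your approach is correct and takes a genuinely different route from the paper. The paper argues by direct case analysis on $r=ht(\eta)$: for each $r$ it writes down the inequalities on $m$ that force the rim hook to occupy rows $i$ through $i+r$, reduces them to $\lambda_{i+r+1}-r\le\lambda_i-m\le\lambda_{i+r}-(r+1)$, and reads off that exactly the $r$ operators $T_i,\dots,T_{i+r-1}$ turn $\xi$ into the partition $\lambda\setminus\eta$. You instead pass to the shifted coordinates $c_j=\lambda_j-j$, observe that each $T_j$ is literally the adjacent transposition on these, and recognise straightening $\xi$ as bubble-sorting the single displaced entry $c_i-m$ back into place; the identification of the swap count with the leg length is then imported from the standard abacus description of rim-hook removal. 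Your argument is more conceptual and delivers for free both the sign $(-1)^{ht(\eta)}$ and the vanishing criterion (coincident contents) that are needed in the next lemma, at the price of invoking an external fact about beta-sets. The paper's proof is entirely self-contained but obscures the underlying reason behind a chain of inequalities. Your own diagnosis is accurate: the only substantive step is the bijection between removable $m$-rim hooks of $\lambda$ and admissible single $m$-decrements of its beta-set, together with the leg-length interpretation of the displacement; in a self-contained write-up that bijection should be stated and proved rather than cited.
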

\begin{proof} Assume that a $m$-rim hook $\eta$ begins at the $i$-th row of $\lambda$. If its height is 0, i.e., it only occupies the $i$-th row of $\lambda$. Then $m\leq \lambda_i-\lambda_{i+1}$, or $\lambda_i-m\geq \lambda_{i+1}$, so $\xi$ is already a partition and $\eta=\lambda-\xi$. Suppose now $\mathrm{ht}(\eta)=1$ and that
it occupies the $i$-th and $(i+1)$-th rows of $\lambda$. By definition of a rim hook,  $\lambda_i-\lambda_{i+1}+2\leq m\leq \lambda_i-\lambda_{i+2}+1$, so $\lambda_{i+2}-1\leq \lambda_i-m\leq \lambda_{i+1}-2$. Hence
$\eta=\lambda-T_i(\xi)$ and $T_i(\xi)$ is a partition. In general, if the height of an $m$-rim hook is $r\geq 1$, one has
\begin{align}\notag
    \sum_{j=0}^{r-1} &(\lambda_{i+j}-\lambda_{i+j+1}+1)+1 \leq m\\
    &\leq \sum_{j=0}^{r-1}(\lambda_{i+j}-\lambda_{i+j+1}+1)+(\lambda_{i+r}-\lambda_{i+r+1}).
\end{align}
Then $\lambda_{i+r+1}-r\leq \lambda_i-m\leq \lambda_{i+r}-(r+1)$, and we obtain $\eta=\lambda-T_{i+r-1}\cdots T_{i}(\xi)$.
\end{proof}

\begin{lem}\label{l:MN1}
	 For each $j\in I$, any partition $\lambda=(\lambda_1,\lambda_2,\cdots \lambda_l)$, and any $m\in \mathbb{N}$, we have
    \begin{align}\label{e:action}
	a_{-jm}^*s_{j\lambda}
 &=\sum_\mu(-1)^{\mathrm{ht}(\lambda/\mu)}s_{j\mu},
    \end{align}
summed over all partitions $\mu$ such that $\la-\mu$ is an $m$-rim hook.
\end{lem}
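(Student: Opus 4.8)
The plan is to compute the action of $a_{-jm}^*$ on the Schur function $s_{j\lambda} = X_{j\lambda_1}X_{j\lambda_2}\cdots X_{j\lambda_l}.1$ by commuting the operator $a_{-jm}^*$ past each vertex operator component $X_{j\lambda_i}$ in turn, using the commutation relation \eqref{e:com3}. First I would move $a_{-jm}^*$ from the left through the product, applying $a_{-jm}^*X_{jn} = X_{jn}a_{-jm}^* + X_{j(n-m)}$ at each step. Since by \eqref{e:com2} we have $a_{-jm}^*.1 = 0$, the term in which $a_{-jm}^*$ is carried all the way to the right and hits the vacuum vanishes. Thus the result is a sum of $l$ terms, one for each position $i$ where the ``correction'' $X_{j(\lambda_i - m)}$ is produced:
\begin{align*}
a_{-jm}^* s_{j\lambda} = \sum_{i=1}^{l} X_{j\lambda_1}\cdots X_{j\lambda_{i-1}} X_{j(\lambda_i - m)} X_{j\lambda_{i+1}}\cdots X_{j\lambda_l}.1.
\end{align*}

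Each summand is exactly the vertex operator expression associated to the integral $l$-tuple $\xi = (\lambda_1, \ldots, \lambda_{i-1}, \lambda_i - m, \lambda_{i+1}, \ldots, \lambda_l)$ appearing in Lemma \ref{l:mstrip}. The next step is to interpret these tuples as genuine Schur functions (possibly zero or signed) via the straightening relation \eqref{e:com1}, $X_{im}X_{in} = -X_{i(n-1)}X_{i(m+1)}$, which is the vertex-operator shadow of the Jacobi--Trudi antisymmetry $s$ obeys under the action of the symmetric group on shifted parts. Applying \eqref{e:com1} repeatedly corresponds precisely to applying the lowering operators $T_i$ in Lemma \ref{l:mstrip}: each transposition of adjacent components with the index shift reorders $\xi$ toward a partition and contributes a factor of $-1$. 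If the tuple $\xi$ cannot be straightened into a valid partition (i.e. two shifted parts coincide so that $X$ components collide and the term is its own negative), the summand is zero; this matches the combinatorial fact that $X_{j(\lambda_i-m)}$ fails to produce an $m$-rim hook when $\lambda_i - m$ collides with a neighbor.

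By Lemma \ref{l:mstrip}, straightening the tuple $\xi$ from position $i$ into the partition $\mu$ requires exactly $ht(\lambda/\mu)$ applications of the lowering operators, each contributing a sign $-1$, so the $i$-th surviving summand equals $(-1)^{ht(\lambda/\mu)} s_{j\mu}$, where $\mu$ is the partition such that $\lambda - \mu$ is the $m$-rim hook beginning at row $i$. Conversely, every $m$-rim hook $\lambda/\mu$ arises from exactly one starting row $i$ and hence from exactly one surviving term, establishing a bijection between the nonzero summands and the $m$-rim hooks contained in $\lambda$. Collecting these contributions yields
\begin{align*}
a_{-jm}^* s_{j\lambda} = \sum_{\mu}(-1)^{ht(\lambda/\mu)} s_{j\mu},
\end{align*}
summed over all $\mu$ with $\lambda - \mu$ an $m$-rim hook, which is the claim.

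The main obstacle is the bookkeeping in the straightening step: one must verify carefully that applying the antisymmetry relation \eqref{e:com1} to the unordered tuple $\xi$ reproduces exactly the lowering-operator sequence $T_{i+r-1}\cdots T_i$ of Lemma \ref{l:mstrip} with the correct number of sign changes, and that the degenerate cases (where $\lambda_i - m$ equals or exceeds an adjacent part in a way that forces cancellation) coincide precisely with the combinatorial condition that no $m$-rim hook exists at that row. Handling the sign accounting and the vanishing of non-straightenable tuples uniformly, rather than case by case on the height $r$, is the delicate point; the cleanest route is to appeal directly to Lemma \ref{l:mstrip}, which already packages the relationship between $\xi$, the lowering operators, and $ht(\eta)$, so that the sign $(-1)^{ht(\lambda/\mu)}$ emerges automatically.
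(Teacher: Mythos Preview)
Your proposal is correct and follows essentially the same route as the paper: commute $a_{-jm}^*$ through the product of $X_{j\lambda_i}$'s via \eqref{e:com3} (using $a_{-jm}^*.1=0$), then straighten each resulting tuple with \eqref{e:com1}, invoking Lemma~\ref{l:mstrip} to identify the surviving terms with $m$-rim hooks and the number of swaps with $ht(\lambda/\mu)$. The paper carries out the straightening slightly more explicitly (tracking the vanishing case $\lambda_i-m+1=\lambda_{i+1}$ and writing down the intermediate partition after $r$ swaps), but the argument is the same.
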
	
\begin{proof} Repeatedly using \eqref{e:com3} we have
that
	\begin{align}
		a_{-jm}^*s_{j\lambda}=\sum_{i=1}^{l}X_{j\lambda_1}\cdots X_{j\lambda_i-m}\cdots X_{j\lambda_l}.1.
	\end{align}
 The vector $X_{j\lambda_1}\cdots X_{j\lambda_i-m}\cdots X_{j\lambda_l}.1$ is the Schur function associated with integral $l$-tuple $(\lambda_1, \ldots, \lambda_{i-1}, \lambda_i-m, \lambda_{i+1}\ldots, \lambda_l)$. If $\lambda_i-m<\lambda_{i+1}$, we need to straighten out the integral $l$-tuple to partition. We use \eqref{e:com1} to rewrite
 \begin{equation*}
 X_{j\lambda_1}\cdots X_{j\lambda_i-m}\cdots X_{j\lambda_l}.1=-X_{j\lambda_1}\cdots X_{j\lambda_{i+1}-1}X_{j\lambda_i-m+1}\cdots X_{j\lambda_l}.1.
 \end{equation*}
 If $\lambda_i-m+1=\lambda_{i+1}$, then the term vanishes. If
 $\lambda_i-m+1<\lambda_{i+2}$,
 then we repeat the swapping $(\lambda_i-m+1, \lambda_{i+2})\to (\lambda_{i+2}-1, \lambda_{i+2}-m+2)$ until we have a partition. Suppose after $r$
 steps, we reach the partition
 \begin{align*}
	\xi=(\lambda_1,\cdots,\lambda_{i-1},\lambda_{i+1}-1,\cdots,\lambda_{i+r}-1,\lambda_i-m+r,\lambda_{i+r+1},\cdots,\lambda_l).
    \end{align*}
Then the skew partition
$$\lambda-\xi=(0^{i-1}, \la_i-\la_{i+1}+1, \ldots,
\la_{i+r}-\la_i+m-r, \la_{i+r+1}, \ldots, \la_l)$$
is a $m$-rim hook and $s_{j\lambda}=(-1)^rs_{j\xi}$.

Conversely, suppose $\xi$ is a partition such that $\la-\xi$ is a horizontal $m$-strip. By Lemma \ref{l:mstrip} the term $s_{\xi}$ appears
in the expansion of $a_{-jm}^*s_{j\lambda}$.
Therefore \eqref{e:action} is proved.
\end{proof}	

\begin{exmp}
	\begin{align*}
		a_{-1,4}^*s_{1(5,3,2)}&=s_{1(1,3,2)}+s_{1(5,-1,2)}+s_{1(5,3,-2)}\\\nonumber
		&=-s_{1(2,2,2)}-s_{1(5,1,0)}		
	\end{align*}
\end{exmp}

Similarly, a colored partition $\boldsymbol{\mu}\subset\boldsymbol{\lambda}$ such that $\boldsymbol{\lambda}-\boldsymbol{\mu}$ is a {\it colored rim hook} if and only if
one of its constituents is a rim hook and the others are $\phi$. The height of $\boldsymbol{\lambda}-\boldsymbol{\mu}$ is exactly the height of its nonempty constituent, denoted by $\mathrm{ht}(\boldsymbol{\lambda}-\boldsymbol{\mu})$ or
$\mathrm{ht}(\boldsymbol{\lambda}/\boldsymbol{\mu})$. For example, if $\boldsymbol{\lambda}=((2,1),(5,3,2,2),(4))$, and $\boldsymbol{\mu}=((2,1),(2,1,1),(4))$, then $\boldsymbol{\lambda}-\boldsymbol{\mu}$ is an 8-rim hook colored at the second constituent with height $3$.
\\
\begin{center}
	$\{\emptyset$, \(\young(::~~~,:~~,:~,~~)\), $\emptyset\}$
\end{center}

If $\boldsymbol{\xi}$ is a colored rim hook contained in a colored Young diagram $\boldsymbol{\lambda}$, then $\boldsymbol{\lambda}\setminus\boldsymbol{\xi}$ denotes the {\it colored subdiagram} of $\boldsymbol{\lambda}$ obtained by removing $\boldsymbol{\xi}$. Similarly we also use
$\boldsymbol{\lambda}\setminus\boldsymbol{\xi}$
to denote the associated colored partition.

Now we are ready to give a vertex algebraic proof of the Murnaghan-Nakayama rule for
the generalized symmetric group $C_k\wr S_n$. The formula \eqref{e:MN-rule1} was first proved by group theoretical method in \cite{AK}.
\begin{thm}\label{t:MN-gensym}
	Given colored partitions $\boldsymbol{\lambda}=(\lambda^{(0)},\lambda^{(1)},\cdots,\lambda^{(k-1)})$ and $\boldsymbol{\rho}=(\rho^{(0)},\rho^{(1)},\cdots,\rho^{(k-1)})$ of $n$,
 where $\rho^{(s)}=(\rho^{(s)}_1,\cdots,\rho^{(s)}_m,\cdots,\rho^{(s)}_{l(s)})$. Then the value of the irreducible character $\chi^{\boldsymbol{\lambda}}$ of $C_k\wr S_n$ at the class $\boldsymbol{\rho}$ is given by
	\begin{align}\label{e:MN-rule1}
		\chi^{\boldsymbol{\lambda}}_{\boldsymbol{\rho}}=\sum_{j=0}^{k-1}\sum_{\boldsymbol{\xi_j}}(-1)^{\mathrm{ht}(\boldsymbol{\xi_j})}\omega^{-sj}\chi^{\boldsymbol{\lambda}\backslash\boldsymbol{\xi_j}}_{\boldsymbol{\rho}\setminus\rho^{(s)}_m},
	\end{align}
where ${\boldsymbol{\xi_j}}$ runs through all colored $\rho^{(s)}_m$-rim hooks contained in ${\boldsymbol{\lambda}}$ that are supported at the $j$-th constituent.
\end{thm}	
\begin{proof} Fixing $j\in I$, it follows from \eqref{e:2.6} that:
 $$a_{-j\rho^{(s)}_m}^*s_{i\lambda^{(i)}}=0,$$
 if $i\neq j$. Then nonzero contributions only come from
 $$a_{-j\rho^{(s)}_m}^*s_{\boldsymbol{\lambda}}=s_{0\lambda^{(0)}}\cdots (a_{-j\rho^{(s)}_m}^*s_{j\lambda^{(j)}})\cdots s_{(k-1)\lambda^{(k-1)}}.$$
  By Lemma \ref{l:MN1}, $a_{-j\rho^{(s)}_m}^*s_{\boldsymbol{\lambda}}$ is expanded into an alternating sum of $s_{\boldsymbol{\lambda}\backslash\boldsymbol{\xi_j}}$
  such that $\boldsymbol{\xi_j}$ is a colored $\rho^{(s)}_m$-rim hook supported at the $j$-th constituent. By Theorem \ref{t:FJW} and converting back to the basis $ {\widetilde{a}_{-s\rho^{(s)}_m}}$, using
     \begin{align*}
         {\widetilde{a}_{-s\rho^{(s)}_m}}=\sum_{j=0}^{k-1}\omega^{-sj}a_{-j\rho^{(s)}_m},
     \end{align*}
    we have shown the theorem.
\end{proof}

\begin{exmp} In the group $C_3\wr S_{12}$, we have
	\begin{align*}
		\chi^{((4,1),(3,1,1),(2))}_{((5,2),(3),(1,1))}=-\chi^{(\emptyset,(3,1,1),(2))}_{((2),(3),(1,1))}+\chi^{((4,1),\emptyset,(2))}_{((2),(3),(1,1))}.
	\end{align*}
 Similarly in $C_3\wr S_{14}$, one has
	\begin{align*}
		\chi^{((3,2),(4,2,1),(2))}_{((2,1),(4,3),(3,1))}=-\chi^{((1),(4,2,1),(2))}_{((2,1),(3),(3,1))}-\omega^2\chi^{((3,2),(1,1,1),(2))}_{((2,1),(3),(3,1))},
	\end{align*}
where $\omega$ is the 3-rd primitive root of unity.
\end{exmp}

Similar to the symmetric group, we have the following result by applying Theorem \ref{t:MN-gensym}. Recall that a {\it colored hook} is a special colored partition if and only if one of its constituent is a hook and the others are $\emptyset$. The length of arm and leg of a colored hook are exactly those of its nonempty constituent respectively. Obviously a colored hook is a special colored rim hook.

\begin{prop}
    Let $\boldsymbol{\kappa}_i$ be a colored partition supported only at the $i$-th constituent $(n)$ and $\omega$ the $k$-th primitive root of unity. Then the irreducible character $\chi^{\boldsymbol{\lambda}}$ of $C_k\wr S_n$ satisfies
    \begin{align}
        \chi^{\boldsymbol{\lambda}}(\boldsymbol{\kappa}_i)=(-1)^r\omega^{-ij} \quad or \quad 0
    \end{align}
    according as $\boldsymbol{\lambda}$ is a colored hook or not, where $j$ is the index of its non-empty constituent and $r$ is its length of leg.
\end{prop}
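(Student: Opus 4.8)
The plan is to apply Theorem~\ref{t:MN-gensym} directly to the conjugacy class $\boldsymbol{\rho}=C_i$, which is supported only at the $i$-th constituent with the single part $(n)$. In this situation the class $\boldsymbol{\rho}$ has exactly one part $\rho^{(s)}_m=n$ with color $s=i$, so the outer sum over colors and parts collapses to a single term. First I would observe that stripping this unique part leaves the empty class $\boldsymbol{\rho}\setminus\rho^{(s)}_m=\boldsymbol{\emptyset}$, so the character on the right-hand side is a character of the trivial group $C_k\wr S_0$, which equals $1$ when $\boldsymbol{\lambda}\setminus\boldsymbol{\lambda_j}=\boldsymbol{\emptyset}$ and is otherwise vacuous.

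Next I would analyze the inner sum over colored $n$-rim hooks $\boldsymbol{\lambda_j}$. Since $\|\boldsymbol{\lambda}\|=n$ and removing an $n$-rim hook must leave the empty colored partition, the entire $\boldsymbol{\lambda}$ must itself be a single $n$-rim hook supported at one color $j$. This forces all but one constituent of $\boldsymbol{\lambda}$ to be empty, and the nonempty constituent $\lambda^{(j)}$ to be a connected skew shape with no $2\times 2$ box whose total size is $n$; that is precisely the condition that $\lambda^{(j)}$ be a hook $(a,1^r)$, equivalently that $\boldsymbol{\lambda}$ be a colored hook. If $\boldsymbol{\lambda}$ is not a colored hook, no such $\boldsymbol{\lambda_j}$ exists and the sum is empty, giving $\chi^{\boldsymbol{\lambda}}(C_i)=0$. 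If $\boldsymbol{\lambda}$ is a colored hook with nonempty constituent at color $j$, there is exactly one choice of $\boldsymbol{\lambda_j}$, namely $\boldsymbol{\lambda}$ itself, and the corresponding sign is $(-1)^{ht(\boldsymbol{\lambda_j})}=(-1)^r$ since the height of a hook equals its leg length $r$, while the root-of-unity factor is $\omega^{-sj}=\omega^{-ij}$ because $s=i$.

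Combining these two observations yields
\begin{align*}
\chi^{\boldsymbol{\lambda}}(C_i)=(-1)^r\omega^{-ij}
\end{align*}
in the hook case and $0$ otherwise, which is exactly the claimed dichotomy. The only real point requiring care is the identification in the inner sum: one must verify that the requirement ``$\boldsymbol{\lambda}$ equals an $n$-rim hook of size $n$'' is equivalent to ``$\boldsymbol{\lambda}$ is a colored hook,'' i.e.\ that a rim hook occupying a full Young diagram of its own size must be an ordinary hook. This is the main (though elementary) obstacle, and it follows because a rim hook is by definition connected with no $2\times 2$ block, so if it coincides with the whole diagram the shape can have at most one cell in each column beyond the first row, forcing the hook shape $(a,1^r)$. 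Everything else is bookkeeping from the single-term collapse of the Murnaghan--Nakayama sum.
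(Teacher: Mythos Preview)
Your proposal is correct and matches the paper's approach exactly: the paper simply states that this proposition follows by applying Theorem~\ref{t:MN-gensym}, and your argument is precisely the unwinding of that single application, including the key observation that an $n$-rim hook filling an entire Young diagram of size $n$ forces the diagram to be a hook. There is nothing to add.
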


Now we study how to decompose the irreducible characters using the dual vertex operator in the other direction.

For each partition $\rho=(\rho_1,\rho_2,\cdots,\rho_l)$, a {\it subpartition} denoted by $\mu\vartriangleleft\rho$ is $\mu=\emptyset$ or $\mu=(\rho_{i_1},\rho_{i_2},\cdots,\rho_{i_s})$, for some $1\leqslant i_1<\cdots<i_s\leqslant l$. Obviously, there are $2^{l(\rho)}$ subpartitions $\mu$ such that $\mu\vartriangleleft\rho$ and
$\rho\backslash\mu$ is also a partition. For example, if $\rho=(4,2,2,1)$, and $\mu=(2,1)$, then $\mu\vartriangleleft\rho$ and $\rho\backslash\mu=(4,2)$. Moreover, $\mu\cup\tau$ of partitions $\mu$ and $\tau$, is defined as the union of parts of $\mu$ and $\tau$ in the descending order. 

\begin{lem}\label{l:3.8}
For any partition $\rho=(\rho_1,\rho_2,\cdots,\rho_l)$, $m\in \mathbb{N}$, and
     $i,j\in I$, one has that
     \begin{align*}
     	X_{jm}^*\widetilde{a}_{-i\rho}=\sum_{\mu\vartriangleleft\rho}\omega^{-ij\cdot(l(\mu))}\widetilde{a}_{-i\rho\backslash\mu}X_{j(m-|\mu|)}^*,
     \end{align*}
summed over all subpartitions of $\rho$.
\end{lem}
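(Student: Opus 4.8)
The plan is to reduce the whole statement to a single-mode commutation identity and then iterate it over the parts of $\rho$ by induction on $l(\rho)$. First I would establish the elementary relation
\begin{align*}
X_{jm}^*\widetilde{a}_{-in}=\widetilde{a}_{-in}X_{jm}^*+\omega^{-ij}X_{j(m-n)}^*.
\end{align*}
To get this, expand $\widetilde{a}_{-in}=\sum_{p=0}^{k-1}\omega^{-ip}a_{-pn}$ using \eqref{e:2.11}. Since $X_{jm}^*$ is built entirely out of the color-$j$ Heisenberg modes, it commutes with $a_{-pn}$ whenever $p\neq j$ (distinct colors commute), while for $p=j$ one invokes \eqref{e:com4}, namely $X_{jm}^*a_{-jn}=a_{-jn}X_{jm}^*+X_{j(m-n)}^*$. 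Only the $p=j$ summand produces a correction term, weighted by $\omega^{-ij}$, which yields the displayed identity.

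Next I would prove the lemma by induction on $l=l(\rho)$, with the trivial base $\rho=\emptyset$ (where $\widetilde{a}_{-i\emptyset}=1$ and the only sub-partition is $\emptyset$). For the inductive step, write $\widetilde{a}_{-i\rho}=\widetilde{a}_{-i\rho_1}\widetilde{a}_{-i\rho'}$ with $\rho'=(\rho_2,\ldots,\rho_l)$, and push $X_{jm}^*$ past the first factor by the single-mode relation:
\begin{align*}
X_{jm}^*\widetilde{a}_{-i\rho}=\widetilde{a}_{-i\rho_1}\bigl(X_{jm}^*\widetilde{a}_{-i\rho'}\bigr)+\omega^{-ij}\bigl(X_{j(m-\rho_1)}^*\widetilde{a}_{-i\rho'}\bigr).
\end{align*}
Applying the induction hypothesis to each bracketed expression expands them as sums over sub-partitions $\mu'\vartriangleleft\rho'$. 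The first term records the case where $\rho_1$ is retained (not contracted); the second term, already carrying an extra $\omega^{-ij}$ and the mode shift $m\mapsto m-\rho_1$, records the case where $\rho_1$ is contracted. Combining them reproduces exactly the sum over all $\mu\vartriangleleft\rho$: the exponent $l(\mu)$ jumps by $1$ precisely when $\rho_1$ is contracted, matching the extra phase $\omega^{-ij}$; the mode index of $X^*$ drops by $\rho_1+|\mu'|=|\mu|$ in that case; and the surviving creation operators assemble in their original relative order into $\widetilde{a}_{-i\rho\backslash\mu}$.

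The computation is mechanical once the single-mode identity is in hand, so I do not expect a deep obstacle. The part requiring the most care is the bookkeeping in the inductive step: one must verify that the binary choice ``contract $\rho_1$ or not'' at each level, when unfolded, ranges over exactly the $2^{l(\rho)}$ sub-partitions $\mu\vartriangleleft\rho$ with the correct multiplicities, that the accumulated phase is $\omega^{-ij\cdot l(\mu)}$, that the retained factors keep their original order so as to form $\widetilde{a}_{-i\rho\backslash\mu}$, and in particular that the \emph{total} mode shift on $X^*$ is $|\mu|$ rather than merely $l(\mu)$. A minor point worth flagging is that the identity is purely algebraic, so $X_{j\ell}^*$ with $\ell<0$ may appear formally; this causes no trouble, since the relation is an operator identity on $V$ and the negative-mode components are legitimate elements of the expansion.
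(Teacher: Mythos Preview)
Your proposal is correct and follows essentially the same approach as the paper: establish the single-mode identity $X_{jm}^*\widetilde{a}_{-i\rho_s}=\widetilde{a}_{-i\rho_s}X_{jm}^*+\omega^{-ij}X_{j(m-\rho_s)}^*$ from \eqref{e:2.11} and \eqref{e:com4}, then induct on $l(\rho)$. Your write-up is in fact more detailed than the paper's own proof, which merely states the single-mode relation, displays the $l(\rho)=2$ case, and invokes induction.
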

\begin{proof}
    For each $1\leq s\leq l$, it follows from the linearity of $\widetilde{a}_{-i\rho_s}$ and the action of \eqref{e:com4} that
	\begin{align}
		X_{jm}^*\widetilde{a}_{-i\rho_s}=\widetilde{a}_{-i\rho_s}X_{jm}^*+\omega^{-ij}X_{j(m-\rho_s)}^*.
	\end{align}
 
    In the case of length $2$ partition, the expression becomes
    \begin{align*}
        \widetilde{a}_{-i\rho}X_{jm}^*+\omega^{-ij}\widetilde{a}_{-i\rho_1}X_{j(m-\rho_2)}^*+\omega^{-ij}\widetilde{a}_{-i\rho_2}X_{j(m-\rho_1)}^*+\omega^{-ij\cdot 2}X_{j(m-|\rho|)}^*.
    \end{align*}
    Using induction on $l(\rho)$, we get the lemma.
\end{proof}

For two colored partitions $\boldsymbol{\mu}$ and $\boldsymbol{\rho}$, if for each $i\in I$, such that $\mu^{(i)}\vartriangleleft\rho^{(i)}$, we write $\boldsymbol{\mu}\vartriangleleft\boldsymbol{\rho}$. The set minus $\boldsymbol{\rho}\backslash\boldsymbol{\mu}$ (removing the empty rows) is also a colored partition. For example, if $\boldsymbol{\rho}=((5,3,1),(4,2,2,1),(4))$, and $\boldsymbol{\mu}=((3),(2,1),(4))$, then $\boldsymbol{\mu}\vartriangleleft\boldsymbol{\rho}$ and $\boldsymbol{\rho}\backslash\boldsymbol{\mu}=((5,1),(4,2),\emptyset)$.

A colored partition $\boldsymbol{\rho}$ naturally gives rise to a partition by forgetting the colors, we call this partition the {\it rearrangement} of $\boldsymbol{\rho}$. For example, take $\boldsymbol{\rho}=((5,1),\emptyset,(4,2))$, it can be rearranged into a regular partition $(5,4,2,1)$. 
We also define the length of a colored partition $l(\boldsymbol{\rho})$ as the sum of $l(\rho^{(i)})$.
Recall that the weighted length 
$\eta(\boldsymbol{\rho})$ is defined as $\sum_iil(\rho^{(i)})$.
 The juxtaposition $\boldsymbol{\mu}\cup \boldsymbol{\tau}$ of two colored partitions is defined as $(\mu^{(i)}\cup \tau^{(i)})$.

 Now we are ready to give our second main result--the dual Maunaghan-Nakayama rule for the generalized symmetric group.

\begin{thm}\label{t:3.9}
	Given colored partitions $\boldsymbol{\lambda}=(\lambda^{(0)},\lambda^{(1)},\cdots,\lambda^{(k-1)})$ and $\boldsymbol{\rho}=(\rho^{(0)},\rho^{(1)},\cdots,\rho^{(k-1)})$ of $n$. Then for any fixed $j\in I$,
 the irreducible character value of $C_k\wr S_n$ is given by
    \begin{align}\label{e:MN-rule2}
    	\chi^{\boldsymbol{\lambda}}_{\boldsymbol{\rho}}=\sum_{\substack{\boldsymbol{\mu}\vartriangleleft\boldsymbol{\rho},\|\boldsymbol{\mu}\|\geq\lambda^{(j)}_1\\\boldsymbol{\tau}\vdash \|\boldsymbol{\mu}\|-\lambda^{(j)}_1}}\omega^{(\eta(\boldsymbol{\tau})-\eta(\boldsymbol{\mu}))j}\frac{{(-1)}^{l(\boldsymbol{\tau})}}{k^{l(\boldsymbol{\tau})}z_{\tau}}\chi^{\boldsymbol{\lambda}\setminus \lambda^{(j)}_1}_{(\boldsymbol{\rho}\backslash\boldsymbol{\mu})\cup\boldsymbol{\tau}}
    \end{align}
where the sum is over colored partitions $\boldsymbol{\mu}$ and $\boldsymbol{\tau}$ such that $\boldsymbol{\mu}\vartriangleleft\boldsymbol{\rho}$, with weight bigger than $\lambda^{(j)}_1$, and $\boldsymbol{\tau}$ of weight $\|\boldsymbol{\mu}\|-\lambda^{(j)}_1$. $z_{\tau}$ is defined in \eqref{e:innerprod}, $\tau$ is the rearrangement of $\boldsymbol{\tau}$, and $\eta(\boldsymbol{\tau})$, $\eta(\boldsymbol{\mu})$ are the weighted lengths $\sum_ii l(\tau^{(i)})$, $\sum_ii l(\mu^{(i)})$ respectively.
\end{thm}
\begin{proof}
	Applying Lemma \ref{l:3.8} repeatedly, one has
	\begin{align}\label{p:1}
		X_{jm}^*\widetilde{a}_{-\boldsymbol{\rho}}=\sum_{\boldsymbol{\mu}\vartriangleleft\boldsymbol{\rho}}\omega^{-\eta(\boldsymbol{\mu})j}\widetilde{a}_{-\boldsymbol{\rho}\backslash\boldsymbol{\mu}}X_{j(m-\|\boldsymbol{\mu}\|)}^*.1,
	\end{align}
summed over all colored partitions $\boldsymbol{\mu}$ such that $\boldsymbol{\mu}\vartriangleleft\boldsymbol{\rho}$.
    By \eqref{e:com2}, nonzero items survive only when $m-\|\boldsymbol{\mu}\| \leq 0$. Taking the coefficient of $z^{n}$ in $X_j(z)^*$, we have that
    \begin{align}\label{p:2}
    	X_{jn}^*.1=\sum_{\rho\vdash n}\frac{{(-1)}^{l(\rho)}}{z_\rho}a_{-j\rho} \quad for \quad n\geq 0.
    \end{align}
    By \eqref{e:2.9} and \eqref{e:2.11}, for a partition $\tau=(\tau_1, \tau_2, \cdots, \tau_{l(\tau)})$ we have
    \begin{align}
        \label{e:3.20}
		a_{-j\tau}&=\frac{1}{k^{l(\tau)}}\prod_{s=1}^{l(\tau)}\left(\sum_{i_s=0}^{k-1}\omega^{ji_s}\widetilde{a}_{-i_s\tau_s}\right)
	\\ \nonumber
        &=\frac{1}{k^{l(\tau)}}\sum_{(i_1,\cdots i_{l(\tau)})\in I^{l(\tau)}}\omega^{|i|j}\widetilde{a}_{-i_1\tau_1}\cdots \widetilde{a}_{-i_{l(\tau)}\tau_{l(\tau)}},
        \end{align}
         where $(i_1,\cdots i_{l(\tau)})$ runs through the Cartesian product $I^{l(\tau)}$ and
         $|i|=i_1+\cdots +i_{l(\tau)}$. 
    We rewrite the $k^{l(\tau)}$ summands by reordering factors according to their colors, where the pairs
    $(i_1, \tau_1), (i_2, \tau_2), \ldots, (i_{l(\tau)}, \tau_{l(\tau)})$
    are rearranged according to their colors. Then
    the composition $(\tau_1, \tau_2, \ldots, \tau_{l(\tau)})$
    becomes a colored partition $\boldsymbol{\tau}$ and the summation of its
    colors $|i|=i_1+\cdots+i_{l(\tau)}$ becomes $\eta(\boldsymbol{\tau})$. Therefore
    \begin{equation}\label{e:conversion}
    a_{-j\tau}=\frac1{k^{l(\tau)}}\sum_{\boldsymbol{\tau}}
    \omega^{j\eta(\boldsymbol{\tau})}\tilde{a}_{-\boldsymbol{\tau}},
    \end{equation}
    where the sum runs through all colored partitions arising from $\tau$. Plugging \eqref{e:conversion} into \eqref{e:3.20} and using \eqref{p:1} and \eqref{p:2}, we have derived the recurrence formula. 
    \end{proof}

    We remark that the dual Murnaghan-Nakayama rule of $C_k\wr S_n$ offers a different iterative route from the
    Ariki-Koike version by directly simplifying the indexing
    colored-partition of the irreducible representation by removing
    the largest row in a fixed constituent, while the usual Murnaghan-Nakayama rule
    is iterated by removing all possible rim-hooks from the indexing partition in all constituents. 
    
    When $k=1$, we have the following dual Maunaghan-Nakayama rule for the symmetric group, which seems new to the authors' knowledge.

\begin{thm}\label{c:3.10}
	The irreducible character value of $S_n$ is given by
	\begin{align}
		\chi^{\lambda}_{\rho}=\sum_{\substack{\mu\vartriangleleft\rho,|\mu|\geq\lambda_1\\\tau\vdash |\mu|-\lambda_1}}\frac{{(-1)}^{l(\tau)}}{z_\tau}\chi^{\lambda\setminus \lambda_1}_{(\rho\backslash\mu)\cup\tau},
	\end{align}
 summed over partitions $\mu$ and $\tau$ such that $\mu$ is a subpartition  of $\rho$ with weight bigger than $\lambda_1$,
 and $\tau$ of weight $|\mu|-\lambda_1$.
\end{thm}

\begin{exmp} Using the recurrence relation, we see that in $C_2\wr S_4$
	\begin{align*}
		\chi^{((1),(3))}_{((2,1),(1))}=-\frac{1}{2}\chi^{((1),\emptyset)}_{((1),\emptyset)}+\frac{1}{2}\chi^{((1),\emptyset)}_{(\emptyset,(1))}.
	\end{align*}
 Similarly in $C_3\wr S_8$, we have
	\begin{align}\notag
		\chi^{((2),(5),(1))}_{((3,1),(2),(2))}=&\frac{8}{9}\chi^{((2),\emptyset,(1))}_{((3),\emptyset,\emptyset)}-\frac{1}{9}\omega\chi^{((2),\emptyset,(1))}_{(\emptyset,(3),\emptyset)}-\frac{1}{9}\omega^2\chi^{((2),\emptyset,(1))}_{(\emptyset,\emptyset,(3))}\\\nonumber
		&-\frac{1}{9}\chi^{((2),\emptyset,(1))}_{((2,1),\emptyset,\emptyset)}-\frac{5}{18}\omega^2\chi^{((2),\emptyset,(1))}_{(\emptyset,(2,1),\emptyset)}-\frac{5}{18}\omega\chi^{((2),\emptyset,(1))}_{(\emptyset,\emptyset,(2,1))}\\\nonumber
		&+\frac{1}{18}\omega\chi^{((2),\emptyset,(1))}_{((2),(1),\emptyset)}+\frac{1}{18}\omega^2\chi^{((2),\emptyset,(1))}_{((2),\emptyset,(1))}-\frac{5}{18}\chi^{((2),\emptyset,(1))}_{(\emptyset,(2),(1))}\\\nonumber
		&+\frac{5}{9}\omega\chi^{((2),\emptyset,(1))}_{((1),(2),\emptyset)}+\frac{5}{9}\omega^2\chi^{((2),\emptyset,(1))}_{((1),\emptyset,(2))}-\frac{5}{18}\chi^{((2),\emptyset,(1))}_{(\emptyset,(1),(2))}\\\nonumber
		&+\frac{4}{81}\chi^{((2),\emptyset,(1))}_{((1,1,1),\emptyset,\emptyset)}-\frac{1}{162}\chi^{((2),\emptyset,(1))}_{(\emptyset,(1,1,1),\emptyset)}-\frac{1}{162}\chi^{((2),\emptyset,(1))}_{(\emptyset,\emptyset,(1,1,1))}\\\nonumber
        &+\frac{5}{54}\omega\chi^{((2),\emptyset,(1))}_{((1,1),(1),\emptyset)}+\frac{5}{54}\omega^2\chi^{((2),\emptyset,(1))}_{((1,1),\emptyset,(1))}-\frac{1}{54}\omega\chi^{((2),\emptyset,(1))}_{(\emptyset,(1,1),(1))}\\\nonumber
        &-\frac{1}{54}\omega^2\chi^{((2),\emptyset,(1))}_{(\emptyset,(1),(1,1))}+\frac{1}{27}\omega^2\chi^{((2),\emptyset,(1))}_{((1),(1,1),\emptyset)}+\frac{1}{27}\omega\chi^{((2),\emptyset,(1))}_{((1),\emptyset,(1,1))}\\\nonumber
        &+\frac{2}{27}\chi^{((2),\emptyset,(1))}_{((1),(1),(1))},
	\end{align}
where $\omega$ is the 3-rd primitive root of unity.
For example, in the expansion of $\chi^{((2),\emptyset,(1))}_{(\emptyset,(2,1),\emptyset)}$, there are only two terms: $\boldsymbol{\mu}=((3,1),\emptyset,(2)), \boldsymbol{\tau}=(\emptyset,(1),\emptyset)$ and $\boldsymbol{\mu'}=((3,1),(2),(2)), \boldsymbol{\tau'}=(\emptyset,(2,1),\emptyset)$. Therefore its coefficient is a sum of two terms.

\end{exmp}

The following is a special case of a direct consequence of Theorem \ref{t:3.9}.
\begin{prop} Let $\boldsymbol{\kappa}_i$ be a colored partition supported only at the $i$th constituent $(n)$.
Then for any colored partition $\boldsymbol{\rho}=(\rho^{(0)},\rho^{(1)},\cdots,\rho^{(k-1)})$ of $n$, we have that
    \begin{align}
        \chi^{\boldsymbol{\kappa}_i}_{\boldsymbol{\rho}}=\omega^{-\eta(\boldsymbol{\rho})i},
    \end{align}
    where $\eta(\boldsymbol{\rho})$ is the weight length
    $\sum_i il({\rho}^{(i)})$.
\end{prop}

\section{Further examples}

For completeness, we first calculate the degree of any irreducible character of $C_k\wr S_n$ \cite{IJS}. Recall that the inner product in $V$ is given by:
\begin{align}
	\langle a_{-\boldsymbol{\lambda}},a_{-\boldsymbol{\mu}} \rangle=\prod_{i=0}^{k-1}\langle a_{-\lambda^{(i)}},a_{-\mu^{(i)}}\rangle=\delta_{\boldsymbol{\lambda}\boldsymbol{\mu}}\prod_{i=0}^{k-1}z_{\lambda^{(i)}}.
\end{align}
Under this inner product the wreath Schur functions are orthonormal:
\begin{align}
	\langle s_{\boldsymbol{\lambda}},s_{\boldsymbol{\mu}} \rangle=\prod_{i=0}^{k-1}\langle s_{\lambda^{(i)}},s_{\mu^{(i)}}\rangle=\delta_{\boldsymbol{\lambda}\boldsymbol{\mu}}.
\end{align}
Then we have
\begin{align}\label{e:inner}
	\langle s_{\boldsymbol{\lambda}},a_{-\boldsymbol{\mu}} \rangle=\prod_{i=0}^{k-1}\langle s_{\lambda^{(i)}},a_{-\mu^{(i)}}\rangle.
\end{align}

The following result is well-known for the wreath products and is included here using the vertex operator approach.
\begin{thm} \cite{O} For any
	colored partition $\boldsymbol{\lambda}=(\lambda^{(0)},\lambda^{(1)},\cdots,\lambda^{(k-1)})$ of $n$, the degree of the irreducible character $\chi^{\boldsymbol{\lambda}}$ of $C_k\wr S_n$ is given by
	\begin{align}
		\chi^{\boldsymbol{\lambda}}(\boldsymbol{1})=\frac{n!}{\prod_{i=0}^{k-1}h(\lambda^{(i)})},
	\end{align}
    where $\boldsymbol{1}=((1^n),\emptyset,\cdots,\emptyset)$ and $h(\lambda^{(i)})$ is the hook length of $\lambda^{(i)}$.
\end{thm}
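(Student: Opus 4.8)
The plan is to compute the degree by specializing the character value formula at the identity conjugacy class and using the orthogonality/inner-product structure established in the excerpt. The identity element of $C_k\wr S_n$ corresponds to the colored partition $\boldsymbol{1}=\{(1^n),\emptyset,\cdots,\emptyset\}$, so by Theorem \ref{t:FJW} the degree is
\begin{align*}
\chi^{\boldsymbol{\lambda}}(\boldsymbol{1})=\langle s_{\boldsymbol{\lambda}}, \widetilde{a}_{-\boldsymbol{1}}\rangle.
\end{align*}
First I would unwind $\widetilde{a}_{-\boldsymbol{1}}$ using the Fourier transform \eqref{e:2.11}: since all parts of $\boldsymbol{1}$ sit in the $0$-colored constituent, we have $\widetilde{a}_{-0,1}=\sum_{j}a_{-j,1}$, so $\widetilde{a}_{-\boldsymbol{1}}=(\widetilde{a}_{-0,1})^n=(\sum_{j=0}^{k-1}a_{-j,1})^n$. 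Expanding this power and pairing against $s_{\boldsymbol{\lambda}}=s_{0\lambda^{(0)}}\cdots s_{(k-1)\lambda^{(k-1)}}$ via the factorization \eqref{e:inner}, I expect the cross terms to organize into a product over colors, reducing the problem to the single-color (symmetric group) case.

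The key reduction is that, by the multiplicativity \eqref{e:inner}, only the multinomial term distributing $|\lambda^{(i)}|$ copies of $a_{-i,1}$ into the $i$-th slot contributes, and each factor becomes $\langle s_{i\lambda^{(i)}}, (a_{-i,1})^{|\lambda^{(i)}|}\rangle$. Thus the multinomial coefficient $\binom{n}{|\lambda^{(0)}|,\ldots,|\lambda^{(k-1)}|}$ factors out, and I would invoke the classical result that $\langle s_{\lambda^{(i)}}, p_1^{|\lambda^{(i)}|}\rangle$ (the coefficient of the power sum $p_1^{m}$ in the Schur function, i.e. the value of the $S_m$-character at the identity) equals $m!/h(\lambda^{(i)})$ by the hook-length formula. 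Here $a_{-i,1}^{|\lambda^{(i)}|}$ plays the role of $p_1^{|\lambda^{(i)}|}$ in the $i$-colored copy of the symmetric-function ring, so this is exactly the degree of the $S_{|\lambda^{(i)}|}$-irreducible indexed by $\lambda^{(i)}$.

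Assembling the pieces, I would write
\begin{align*}
\chi^{\boldsymbol{\lambda}}(\boldsymbol{1})=\binom{n}{|\lambda^{(0)}|,\cdots,|\lambda^{(k-1)}|}\prod_{i=0}^{k-1}\frac{|\lambda^{(i)}|!}{h(\lambda^{(i)})}=\frac{n!}{\prod_{i=0}^{k-1}|\lambda^{(i)}|!}\prod_{i=0}^{k-1}\frac{|\lambda^{(i)}|!}{h(\lambda^{(i)})}=\frac{n!}{\prod_{i=0}^{k-1}h(\lambda^{(i)})},
\end{align*}
which is the claimed formula, since the factorials cancel. The main obstacle I anticipate is bookkeeping the combinatorics of expanding $(\sum_j a_{-j,1})^n$ and justifying rigorously that only the balanced multinomial term survives the pairing — this hinges on the orthogonality across colors in \eqref{e:inner}, namely that $\langle s_{i\lambda^{(i)}}, a_{-j\mu}\rangle=0$ whenever a factor of the wrong color $j\neq i$ is paired against $s_{i\lambda^{(i)}}$, forcing each color's power-sum weight to match $|\lambda^{(i)}|$ exactly. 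Once that vanishing is in hand, the reduction to the hook-length formula per constituent is routine.
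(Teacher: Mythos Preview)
Your proposal is correct and follows essentially the same route as the paper's proof: both compute $\langle s_{\boldsymbol{\lambda}}, (\sum_{j=0}^{k-1}a_{-j1})^n\rangle$, isolate the single surviving multinomial term via the color-orthogonality in \eqref{e:inner}, and then apply the hook-length formula in each color. Your anticipated ``obstacle'' is exactly what the paper dispatches in one line, so there is no gap.
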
	
\begin{proof}
	By definition,
		$\chi^{\boldsymbol{\lambda}}(\boldsymbol{1})=\langle s_{\boldsymbol{\lambda}}, (\widetilde{a}_{-01})^n \rangle=\langle s_{\boldsymbol{\lambda}}, (\sum_{j=0}^{k-1}a_{-j1})^n \rangle$.
    The non-zero items are only
	\begin{align*}
		\langle s_{\boldsymbol{\lambda}},\prod_{j=0}^{k-1}a_{-j1}^{|\lambda^{(j)}|} \rangle=\prod_{j=0}^{k-1}\langle s_{\lambda^{(j)}},a_{-j1}^{|\lambda^{(j)}|} \rangle=\prod_{j=0}^{k-1}\frac{|\lambda^{(j)}|!}{h(\lambda^{(j)})},
	\end{align*}
    where the second equality has used the degree formula of irreducible characters in $S_{\lambda^{(j)}}$. Since the coefficient of $\prod_{j=0}^{k-1}a_{-j1}^{|\lambda^{(j)}|}$ in $(\sum_{j=0}^{k-1}a_{-j1})^n$ equals to
    \begin{align*}
    	C_n^{|\lambda^{(0)}|}C_{n-|\lambda^{(0)}|}^{|\lambda^{(1)}|}\cdots=\frac{n!}{\prod_{j=0}^{k-1}|\lambda^{(j)}|!},
    \end{align*}
  and the theorem is proved.
\end{proof}	

\begin{cor}
    Given colored partitions $\boldsymbol{\lambda}=(\lambda^{(0)},\lambda^{(1)},\cdots,\lambda^{(k-1)})$ of $n$, let $\boldsymbol{\kappa}^j$ be a colored partition supported only at the $j$-th constituent $(1^n)$, and $\omega$ be the $k$-th primitive root of unity. Then we have
    \begin{align}
        \chi^{\boldsymbol{\lambda}}_{\boldsymbol{\kappa}^j}=\frac{\omega^{-\mathrm{deg}(\boldsymbol{\lambda})j}n!}{\prod_{i=0}^{k-1}h(\lambda^{(i)})},
    \end{align}
    where $\mathrm{deg}(\boldsymbol{\lambda})$ is defined as the sum of $i |\lambda^{(i)}|$.
\end{cor}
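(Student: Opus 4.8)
The plan is to run the same computation that proves the preceding degree formula of Osima, but now carrying along the roots of unity produced by the Fourier transform \eqref{e:2.11}. Since $C^j$ is the colored partition supported at the single constituent $(1^n)$ in color $j$, Theorem \ref{t:FJW} turns the character value into a single matrix coefficient
\[
\chi^{\boldsymbol{\lambda}}(C^j)=\langle s_{\boldsymbol{\lambda}}, \widetilde{a}_{-j(1^n)}\rangle=\langle s_{\boldsymbol{\lambda}}, (\widetilde{a}_{-j1})^n\rangle .
\]
First I would rewrite $\widetilde{a}_{-j1}$ in the untilded basis via \eqref{e:2.11}, namely $\widetilde{a}_{-j1}=\sum_{t=0}^{k-1}\omega^{-jt}a_{-t1}$ (this is exactly the $i=0$ identity $\widetilde{a}_{-01}=\sum_t a_{-t1}$ used in the proof of Osima's theorem, now with the nontrivial color $j$), and then expand the $n$-th power by the multinomial theorem.

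A typical summand is indexed by a tuple $(m_0,\ldots,m_{k-1})$ with $\sum_t m_t=n$; it equals $\prod_t a_{-t1}^{m_t}$ and carries the coefficient $\frac{n!}{\prod_t m_t!}\,\omega^{-j\sum_t t\,m_t}$. Next I would pair each summand against $s_{\boldsymbol{\lambda}}$ using the color-wise factorization \eqref{e:inner}, which gives $\prod_i\langle s_{\lambda^{(i)}}, a_{-i1}^{m_i}\rangle$. By the grading of the bilinear form this vanishes unless $m_i=|\lambda^{(i)}|$ for every $i$, so exactly one tuple survives, and for it the phase exponent collapses to $\sum_i i\,|\lambda^{(i)}|=deg(\boldsymbol{\lambda})$. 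Invoking the $S_{|\lambda^{(i)}|}$ degree formula $\langle s_{\lambda^{(i)}}, a_{-i1}^{|\lambda^{(i)}|}\rangle=|\lambda^{(i)}|!/h(\lambda^{(i)})$ exactly as in the previous proof, and combining with the multinomial coefficient $n!/\prod_i|\lambda^{(i)}|!$, everything reduces to
\[
\chi^{\boldsymbol{\lambda}}(C^j)=\omega^{-deg(\boldsymbol{\lambda})j}\,\frac{n!}{\prod_i|\lambda^{(i)}|!}\prod_i\frac{|\lambda^{(i)}|!}{h(\lambda^{(i)})}=\frac{\omega^{-deg(\boldsymbol{\lambda})j}\,n!}{\prod_{i=0}^{k-1}h(\lambda^{(i)})},
\]
which is the asserted formula.

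There is no genuine analytic difficulty here; the proof is a phase-decorated version of Osima's degree computation, and the only thing demanding care is the bookkeeping of the root-of-unity weights. The one point worth isolating is that the color-wise orthogonality in \eqref{e:inner} pins the single surviving multinomial term to $m_i=|\lambda^{(i)}|$, so the otherwise tuple-dependent exponent $\sum_t t\,m_t$ is forced to equal $deg(\boldsymbol{\lambda})$; this is what makes the final phase clean rather than a sum over many terms. Accordingly, I expect the main (and minor) obstacle to be matching the sign conventions of \eqref{e:2.11} so that the $\omega^{-jt}$ factors appear with the correct sign, and confirming that no other tuple contributes—both of which are immediate from the grading and the factorization \eqref{e:inner}.
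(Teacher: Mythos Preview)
Your proposal is correct and is exactly the argument the paper has in mind: the corollary is stated without proof immediately after Osima's degree formula, and the implicit derivation is precisely to rerun that proof with $\widetilde{a}_{-01}$ replaced by $\widetilde{a}_{-j1}=\sum_t\omega^{-jt}a_{-t1}$, letting the color-wise factorization \eqref{e:inner} pin the surviving multinomial term and collapse the phase to $\omega^{-j\,deg(\boldsymbol{\lambda})}$. There is nothing to add.
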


In the dual case, one has the following result.
\begin{prop}
    Given a colored partition $\boldsymbol{\rho}=(\rho^{(0)},\rho^{(1)},\cdots,\rho^{(k-1)})$ of $n$, the irreducible character value is
    \begin{align}\label{e:char-}
		\chi^{\boldsymbol{\kappa}^j}_{\boldsymbol{\rho}}=\omega^{-\eta(\boldsymbol{\rho})j}(-1)^{l(\boldsymbol{\rho})+n},
	\end{align}
    where $\boldsymbol{\kappa}^j$ is a colored partition supported only at the $j$-th constituent $(1^n)$, $\eta(\boldsymbol{\rho})$ is the weighted length $\sum_i il(\rho^{(i)})$.
\end{prop}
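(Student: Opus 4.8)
The plan is to compute the matrix coefficient directly, in close parallel with the proof of the preceding proposition (the row case $C_i$ supported at $(n)$). By Theorem \ref{t:FJW} we have $\chi^{C^j}_{\boldsymbol{\rho}}=\langle s_{C^j},\widetilde{a}_{-\boldsymbol{\rho}}\rangle$, and since $C^j$ is supported only at the $j$-th constituent $(1^n)$, this is $\langle s_{j(1^n)},\widetilde{a}_{-\boldsymbol{\rho}}\rangle$. The single feature distinguishing this from the row case is that $s_{j(1^n)}$ is the elementary symmetric function rather than the complete one. First I would record its expansion in the power-sum (that is, $a_{-j\sigma}$) basis, $s_{j(1^n)}=\sum_{\sigma\vdash n}\frac{(-1)^{n-l(\sigma)}}{z_\sigma}a_{-j\sigma}$, which is exactly the identity $X_{jn}^*.1=(-1)^n s_{j(1^n)}$ already obtained inside the proof of Theorem \ref{t:3.9}, so I would simply quote it from there.

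Next I would expand $\widetilde{a}_{-\boldsymbol{\rho}}$ back into the $a$-basis using $\widetilde{a}_{-in}=\sum_{t=0}^{k-1}\omega^{-it}a_{-tn}$, so that each factor $\widetilde{a}_{-i\rho^{(i)}_m}$ becomes a sum over a color $t$ assigned to that part. Pairing against the monochromatic element $a_{-j\sigma}$ and invoking the orthogonality \eqref{e:innerprod}, which is diagonal in color, forces every part of every constituent to be assigned the color $j$: any part carrying a color $i\neq j$ creates a nontrivial state in the $i$-th tensor factor, which pairs to zero against the vacuum there. The scalar collected from the surviving term is $\prod_{i,m}\omega^{-ij}=\omega^{-j\sum_i i\,l(\rho^{(i)})}=\omega^{-\eta(\boldsymbol{\rho})j}$, and the surviving operator is $a_{-j\widehat{\rho}}$, where $\widehat{\rho}$ is the underlying partition of $\boldsymbol{\rho}$ obtained by merging all parts (these commute, being creation modes of one fixed color).

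Collecting this gives $\chi^{C^j}_{\boldsymbol{\rho}}=\omega^{-\eta(\boldsymbol{\rho})j}\sum_{\sigma\vdash n}\frac{(-1)^{n-l(\sigma)}}{z_\sigma}\langle a_{-j\sigma},a_{-j\widehat{\rho}}\rangle$, and the orthogonality $\langle a_{-j\sigma},a_{-j\widehat{\rho}}\rangle=\delta_{\sigma\widehat{\rho}}z_{\widehat{\rho}}$ collapses the sum to the single term $\sigma=\widehat{\rho}$. Since $z_{\widehat{\rho}}$ cancels and $l(\widehat{\rho})=l(\boldsymbol{\rho})$, I obtain $\chi^{C^j}_{\boldsymbol{\rho}}=\omega^{-\eta(\boldsymbol{\rho})j}(-1)^{n-l(\boldsymbol{\rho})}$, which equals $\omega^{-\eta(\boldsymbol{\rho})j}(-1)^{l(\boldsymbol{\rho})+n}$, as claimed.

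I expect the only genuinely delicate step to be the color bookkeeping in the second paragraph: one must argue cleanly that the Fock-space form factorizes over colors, so that pairing a pure-color-$j$ monomial against the multicolored expansion of $\widetilde{a}_{-\boldsymbol{\rho}}$ kills every term except the fully color-$j$ one, and then verify that the accumulated phase is precisely $\omega^{-\eta(\boldsymbol{\rho})j}$. Everything else is the routine $e_n$-versus-$h_n$ comparison with the preceding proposition, the extra sign $(-1)^{n-l(\boldsymbol{\rho})}$ being the only new output.
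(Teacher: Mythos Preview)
Your argument is correct, and its first half---the color reduction via the Fourier expansion of $\widetilde{a}_{-\boldsymbol{\rho}}$ and the color-diagonal orthogonality, yielding the factor $\omega^{-\eta(\boldsymbol{\rho})j}$ and reducing to $\langle s_{j(1^n)},a_{-j\widehat{\rho}}\rangle$---is essentially what the paper does (via \eqref{e:inner}). The second half differs: you compute the uncolored value $\chi^{(1^n)}_{\widehat{\rho}}$ in one stroke by expanding $s_{j(1^n)}$ in the power-sum basis and invoking the orthogonality \eqref{e:innerprod}, which immediately collapses to $(-1)^{n-l(\widehat{\rho})}$. The paper instead proves $\chi^{(1^n)}_{\rho}=(-1)^{l(\rho)+n}$ by induction on $n$ using its own new iterative rule (Corollary~\ref{c:3.10}), together with the identities $\sum_{\lambda\vdash m}z_\lambda^{-1}=1$ and $\sum_{k=1}^{l(\rho)}(-1)^k\binom{l(\rho)}{k}=-1$. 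Your route is shorter and more self-contained; the paper's route serves as a worked illustration of the iterative formula developed in Section~3. One small remark: the identity you cite as ``$X_{jn}^*.1=(-1)^n s_{j(1^n)}$ already obtained inside the proof of Theorem~\ref{t:3.9}'' is not stated there in that form---the proof gives the power-sum expansion of $X_{jn}^*.1$, from which your expansion of $s_{j(1^n)}$ follows immediately, but you should either make that deduction explicit or simply quote the standard $e_n$-to-$p_\rho$ expansion.
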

\begin{proof} In view of \eqref{e:inner}, the only nonzero factor of $\langle s_{\boldsymbol{\kappa}^j}, \widetilde{a}_{\boldsymbol{\rho}} \rangle$ is
    \begin{align*}
        \langle s_{j(1^n)}, \prod_{i=0}^{k-1}\omega^{-ij|\rho^{(i)}|}a_{-j\rho^{(i)}} \rangle
    \end{align*}
    since $|\lambda^{(j)}|$ exhausts $n$.
    Note that $\prod_{i=0}^{k-1}a_{-j\rho^{(i)}}$ can be written as $a_{-j\rho}$, where $\rho$ is the rearrangement of $\boldsymbol{\rho}$.

    For the character value $\chi^{(1^n)}_{\rho}$, we apply Theorem \ref{c:3.10} with induction on $n$. First $\chi^{(1)}_{(1)}=1$ is clear. The inductive hypothesis says that
    $$\chi^{(1^{n-1})}_{(\rho\backslash\mu)\cup\tau}=(-1)^{l(\rho)-l(\mu)+l(\tau)+n-1}.$$
    Note that
    $$\sum_{\lambda\vdash n}\frac{1}{z_\lambda}=1.$$
    Then
    $$\chi^{(1^n)}_{\rho}=(-1)^{l(\rho)+n-1}\sum_{\mu\vartriangleleft\rho,|\mu|\geq 1}(-1)^{l(\mu)},$$
    where the sum is over all nonempty subpartitions $\mu\vartriangleleft\rho$, which is an alternating sum of binomial coefficients:
    $$\sum_{k=1}^{l(\rho)}(-1)^k\binom{l(\rho)}{k}=-C_{l(\rho)}^0=-1,$$
    Therefore
    $\chi^{(1^n)}_{\rho}=(-1)^{l(\rho)+n}$. Obviously $l(\boldsymbol{\rho})=l(\rho)$, and $\prod_{i=0}^{k-1}\omega^{-ij|\rho^{(i)}|}$ equals to $\omega^{-\eta(\boldsymbol{\rho})j}$, subsequently we have  shown
    $\chi^{\boldsymbol{\kappa}^j}_{\boldsymbol{\rho}}$
    is given as stated.
\end{proof}

Next we move on to study a relationship between irreducible character values of $C_k\wr S_n$ and those of $S_{kn}$ mod $k$.  Note that $k$ does not have to be a prime.

Recall that the {\it $k$-core $\gamma_\la$} of partition $\la=(\la_1, \la_2,\cdots, \la_l)$ is defined by removing all $k$-rim hooks of $\la$. For example, let $\la=(4,2,1)$, $k=3$,
\\
\begin{center}
	\(\young(~~~~,~~,~)\) $\to$ \(\young(~~~~,\times\times,\times)\) $\to$ \(\young(~\times\times\times,\times\times,\times)\)
\end{center}
~\\
then the 3-core of $\la$ is $(1)$. 

Let $T=T(\lambda)$ be the set of paths from partition $\la$ to its core $\gamma_{\la}$, where each path is a procedure of removing $k$-rim hooks. For each path $t\in T$, denote by $\mathrm{ht}(t)$ the sum of heights of $k$-rim hooks in path $t$. It is known that $\sigma_{\la}: =(-1)^{\mathrm{ht}(t)}$ is well defined and independent of the paths in $T(\lambda)$ \cite[Prop. 2.2]{MO}. For example, the 3-core of $\lambda=(5,4,3,2)$ is (2). Consider two paths $t_1, t_2\in T$ as follows. $t_1$ has $\mathrm{ht}(t_1)=5$:
\\
	\begin{center}
		\(\young(~~~~~,~~~~,~\times\times,~\times)\) $\to$ \(\young(~~~\times\times,~~~\times,~\times\times,~\times)\) $\to$ \(\young(~~\times\times\times,~\times\times\times,~\times\times,~\times)\) $\to$
		\(\young(~~\times\times\times,\times\times\times\times,\times\times\times,\times\times)\)
	\end{center}
while $t_2$ has $\mathrm{ht}(t_2)=3$:
\\
 \begin{center}
		\(\young(~~~~~,~~\times\times,~~\times,~~)\) $\to$ \(\young(~~~~~,~~\times\times,~\times\times,\times\times)\) $\to$ \(\young(~~~~~,\times\times\times\times,\times\times\times,\times\times)\) $\to$
		\(\young(~~\times\times\times,\times\times\times\times,\times\times\times,\times\times)\)
	\end{center}
so $\sigma_{\lambda}$ equals to $-1$.

The {\it $k$-quotient $\boldsymbol{\beta}_\la$} of $\la$ can be constructed as follows. Choose $m$ as a multiple of $k$
such that $m\geq l(\lambda)$, let $\delta_m=(m-1, m-2,\cdots,1,0)$, $\xi=\la+\delta_m$, which is a strict partition. For each $0\leq r\leq k-1$, suppose there are $m_r$ parts $\xi_i$ that are congruent to $r$ mod $k$. Each of these $\xi_i \,(1\leq i\leq m_r$) is divided by $k$
and reordered as
$$\xi_i=k\cdot \xi^{(r)}_s+r$$
such that $\xi^{(r)}_1>\xi^{(r)}_2>\cdots>\xi^{(r)}_{m_r}\geq 0$. So for each $0\leq r\leq k-1$, we obtain a partition
$\la^{(r)}=(\la^{(r)}_1, \la^{(r)}_2,\cdots,\la^{(r)}_{m_r})$ by 
letting $\la^{(r)}_s=\xi^{(r)}_s-m_r+s$ ($1\leq s\leq m_r$).
Now the colored partition $(\la^{(0)}, \la^{(1)},\ldots, \la^{(k-1)})=\boldsymbol{\beta}_\la$ is 
the $k$-quotient of $\la$.

For example, given $\la=(4,2,1)$ and $k=3$ as above. Take $m=3$, then $\xi=(6,3,1)$, $m_0=2, m_1=1$ and $m_2=0$. We have
$$\la^{(0)}_1=2-2+1=1, \quad \la^{(0)}_2=1-2+2=1; \quad \la^{(1)}_1=0-1+1=0,$$
thus the 3-quotient of $\la$ is $((1,1), \emptyset, \emptyset)$.

The above procedure shows that any partition $\la$ determines its $k$-core $\gamma_\la$ and $k$-quotient $\boldsymbol{\beta}_\la$ uniquely. Moreover,
$$|\la|=k\cdot \Arrowvert\boldsymbol{\beta}_\la\Arrowvert+|\gamma_\la|.$$
In fact, $\la$ also can be determined uniquely by $\gamma_\la$ and $\beta_\la$ conversely (cf. \cite[I.I. Ex. 8]{Mac}).

For convenience, we define the {\it standard form} of an irreducible character value of
$C_k\wr S_n$. According to Theorem \ref{t:MN-gensym}, each $\chi_{\boldsymbol{\rho}}^{\boldsymbol{\la}}$ can be written as
$$c_0+c_1\omega+\cdots+c_{k-1}\omega^{k-1}, c_i\in \mathbb{Z}$$
since the coefficients of the recursive formula lie in $\mathbb{Z}[\omega]$. However, $k$-tuple $(c_0, c_1, \cdots, c_{k-1})$ is not unique due to $1+\omega+\cdots+\omega^{k-1}=0$. Indeed,
$$c_0+c_1\omega+\cdots+c_{k-1}\omega^{k-1}=c_0'+c_1'\omega+\cdots+c_{k-1}'\omega^{k-1}$$
if and only if $c_0-c_0'=c_1-c_1'=\cdots=c_{k-1}-c_{k-1}'$. For example, the values of $\chi_{(\emptyset, (1), (1,1))}^{(\emptyset, (2), (1))}$ can be
\begin{gather*}
\vdots\\
2+\omega+3\omega^2\\
1+2\omega^2\\
-\omega+\omega^2\\
\vdots
\end{gather*}
The standard form is the {\it unique} form such that the sum of $c_i$ lies in the color set $I=\{0, 1, \ldots, k-1\}$. We also denote  by $d(\chi_{\boldsymbol{\rho}}^{\boldsymbol{\la}})$ the sum of $c_i$ in the standard form. Then $-\omega+\omega^2$ is the standard form in the above example, and $d(\chi_{(\emptyset, (1), (1,1))}^{(\emptyset, (2), (1))})=0$.

\begin{thm} (Relation between $C_k\wr{S}_n$ and $S_{kn}$)
Given colored partitions $\boldsymbol{\la}$ and $\boldsymbol{\rho}$ of $n$. Let $\la$ be the partition of $kn$ with the $k$-core $\emptyset$ and $k$-quotient $\boldsymbol{\la}$, and let $\rho$ be the rearrangement of $k\boldsymbol{\rho}$.
Then
 $$d(\chi_{\boldsymbol{\rho}}^{\boldsymbol{\la}})=\sigma_{\la}(\chi_{\rho}^{\la} \quad mod \quad k).$$
\end{thm}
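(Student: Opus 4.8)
The plan is to recognize the operation $d(\cdot)$ as a ring-theoretic specialization and to evaluate both sides at $\omega=1$ modulo $k$. First I would observe that every value $\chi^{\boldsymbol{\lambda}}_{\boldsymbol{\rho}}=\sum_{i=0}^{k-1}c_i\omega^i$ lives in $\mathbb{Z}[\omega]=\mathbb{Z}[x]/(1+x+\cdots+x^{k-1})$, and that the assignment $\sum_i c_i\omega^i\mapsto \sum_i c_i \bmod k$ is exactly the substitution $\omega\mapsto 1$ followed by reduction mod $k$. This is well defined: sending $x\mapsto 1$ carries the relation $1+x+\cdots+x^{k-1}$ to $k\equiv 0$, so it descends to a ring homomorphism $\mathbb{Z}[\omega]\to\mathbb{Z}/k\mathbb{Z}$, and the standard-form ambiguity (adding a multiple of $1+\omega+\cdots+\omega^{k-1}$, which evaluates to $k$) disappears mod $k$. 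Consequently $d(\chi^{\boldsymbol{\lambda}}_{\boldsymbol{\rho}})\equiv \chi^{\boldsymbol{\lambda}}_{\boldsymbol{\rho}}\big|_{\omega=1}\pmod k$, and the theorem reduces to the integer identity $\chi^{\boldsymbol{\lambda}}_{\boldsymbol{\rho}}\big|_{\omega=1}=\sigma_{\la}\,\chi^{\la}_{\rho}$.

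Second, I would compute the left-hand specialization from Theorem~\ref{t:FJW}. Since $\chi^{\boldsymbol{\lambda}}_{\boldsymbol{\rho}}=\langle s_{\boldsymbol{\lambda}},\widetilde{a}_{-\boldsymbol{\rho}}\rangle$ and $\widetilde{a}_{-in}=\sum_{j=0}^{k-1}\omega^{-ij}a_{-jn}$ by \eqref{e:2.11}, setting $\omega=1$ collapses each $\widetilde{a}_{-in}$ to the color-independent element $\sum_{j}a_{-jn}$. Expanding the resulting product over all parts of $\boldsymbol{\rho}$ and applying the orthogonality \eqref{e:inner} constituent by constituent, I obtain
$$\chi^{\boldsymbol{\lambda}}_{\boldsymbol{\rho}}\big|_{\omega=1}=\sum_{\overline{\rho}=\pi^{(0)}\sqcup\cdots\sqcup\pi^{(k-1)}}\ \prod_{i=0}^{k-1}\chi^{\lambda^{(i)}}_{\pi^{(i)}},$$
where $\overline{\rho}\vdash n$ is the uncolored rearrangement of $\boldsymbol{\rho}$ and the sum runs over all ways of assigning a color in $I$ to each part of $\overline{\rho}$, collecting the parts of color $i$ into $\pi^{(i)}$; terms with $|\pi^{(i)}|\neq|\lambda^{(i)}|$ vanish automatically.

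Third, I would invoke the classical core/quotient identity for $S_{kn}$. With $\rho=k\overline{\rho}$ (all parts divisible by $k$) and $\la$ of empty $k$-core with $k$-quotient $\boldsymbol{\lambda}$, the iterated Murnaghan--Nakayama rule --- equivalently the adjoint $\phi_k^{*}(s_{\la})=\sigma_{\la}\,s_{\lambda^{(0)}}\cdots s_{\lambda^{(k-1)}}$ of the plethysm $p_m\mapsto p_{km}$ (cf.\ \cite{Mac}) --- gives
$$\chi^{\la}_{\rho}=\langle s_{\la},p_{k\overline{\rho}}\rangle=\sigma_{\la}\sum_{\overline{\rho}=\pi^{(0)}\sqcup\cdots\sqcup\pi^{(k-1)}}\ \prod_{i=0}^{k-1}\chi^{\lambda^{(i)}}_{\pi^{(i)}},$$
the inner sum being the coproduct expansion of $\langle\prod_i s_{\lambda^{(i)}},p_{\overline{\rho}}\rangle$. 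Comparing with the previous display yields $\chi^{\boldsymbol{\lambda}}_{\boldsymbol{\rho}}\big|_{\omega=1}=\sigma_{\la}\chi^{\la}_{\rho}$ (using $\sigma_{\la}^2=1$), and reducing mod $k$ via the first step gives $d(\chi^{\boldsymbol{\lambda}}_{\boldsymbol{\rho}})=\sigma_{\la}(\chi^{\la}_{\rho}\bmod k)$, as claimed.

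The main obstacle is the third step. The sign $\sigma_{\la}$ is produced by the discrepancy between the height of a removed $km$-rim hook of $\la$ and that of the corresponding $m$-rim hook in a single component of the $k$-quotient, and the delicate point is that these local sign discrepancies multiply to the global, path-independent invariant $\sigma_{\la}$. I would handle this either by citing the core/quotient theory directly, or by an induction on $n$ that applies Theorem~\ref{t:MN-gensym} in tandem with the abacus description of $k$-rim hook removal, which converts a $km$-rim hook removal on $\la$ into an $m$-rim hook removal on one runner, i.e.\ on one constituent of $\boldsymbol{\lambda}$.
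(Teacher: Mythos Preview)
Your argument is correct and reaches the same conclusion as the paper, but by a genuinely different route.

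The paper's proof stays at the level of Murnaghan--Nakayama paths throughout: it writes $\chi^{\boldsymbol{\la}}_{\boldsymbol{\rho}}=\sum_{p\in P}\sigma_p\,\omega(p)$ via Theorem~\ref{t:MN-gensym} and $\chi^{\la}_{\rho}=\sum_{\tilde p\in\tilde P}\sigma_{\tilde p}$ via the classical rule, and then invokes the bijection $P\leftrightarrow\tilde P$ between rim-hook removal paths together with Osima's ``fundamental equation'' $\sigma_p=\sigma_{\la}\sigma_{\tilde p}$ to get $\sum_{p}\sigma_p=\sigma_{\la}\chi^{\la}_{\rho}$; reducing mod $k$ yields $d(\chi^{\boldsymbol{\la}}_{\boldsymbol{\rho}})$. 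You instead bypass paths entirely: specializing $\omega\mapsto 1$ in the matrix-coefficient formula collapses the colors and produces the sum over color-assignments of parts of $\overline\rho$, and the plethysm identity $\phi_k^{*}(s_{\la})=\sigma_{\la}\prod_i s_{\lambda^{(i)}}$ from \cite{Mac} delivers the same integer relation $\chi^{\boldsymbol{\la}}_{\boldsymbol{\rho}}\big|_{\omega=1}=\sigma_{\la}\chi^{\la}_{\rho}$ in one stroke. Your approach is cleaner and coordinate-free, at the cost of importing a heavier symmetric-function fact; the paper's approach is more hands-on but leans on Osima's sign formula, which is the combinatorial content of that same plethysm identity. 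Your closing remark---that an induction via Theorem~\ref{t:MN-gensym} together with the abacus description of $k$-rim hook removal would also work---is precisely the paper's argument phrased differently.

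One small caveat: the identification $\mathbb{Z}[\omega]=\mathbb{Z}[x]/(1+x+\cdots+x^{k-1})$ is only correct for prime $k$, whereas the paper explicitly allows composite $k$. The specialization $\omega\mapsto 1$ modulo $k$ is still a well-defined ring map $\mathbb{Z}[\omega]\to\mathbb{Z}/k\mathbb{Z}$ (since $\Phi_k(1)\mid k$), so your Step~1 survives, but you should drop the incorrect ring description. The paper's own ``standard form'' discussion is equally informal on this point, so this does not affect the comparison.
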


\begin{proof}
Let $P$ be the set of paths from $\boldsymbol{\la}$ to $\emptyset$ that remove colored $|\rho^{(0)}_1|$-rim hooks, colored $|\rho^{(0)}_2|$-rim hooks,..., colored $|\rho^{(k-1)}_{l(k-1)-1}|$-rim hooks, colored $|\rho^{(k-1)}_{l(k-1)}|$-rim hooks.
By Theorem \ref{t:MN-gensym} it follows that the irreducible character value $\chi_{\boldsymbol{\rho}}^{\boldsymbol{\la}}$
of $C_k\wr S_n$ can be written as
$$\chi_{\boldsymbol{\rho}}^{\boldsymbol{\la}}=\sum_{p\in P}\sigma_p\omega(p),$$
where the sum is over all paths $p\in P$ and $\sigma_p: =(-1)^{\mathrm{ht}(p)}$ with 
$\mathrm{ht}(p)$ being the sum of all colored rim-hook heights in path $p$. Note that $\omega(p)$ equals to some power of $\omega$ depending on $p$. Suppose the standard form of $\chi_{\boldsymbol{\rho}}^{\boldsymbol{\la}}$ is of the following form: $\sum_{i=0}^{k-1}c_i\omega^i$.

On the other hand, let $\tilde{P}$ be the set of paths from $\la$ to $\emptyset$ that remove $k|\rho_1|$-rim hooks, $k|\rho_2|$-rim hooks,..., and $k|\rho_{l(\la)}|$-rim hooks. By the ordinary Murnaghan-Nakayama rule of $S_{kn}$, we have that
$$\chi_{\rho}^{\la}=\sum_{\tilde{p}\in \tilde{P}}\sigma_{\tilde{p}},$$
where $\sigma_{\tilde{p}}=(-1)^{\mathrm{ht}(\tilde{p})}$ is defined similarly as $\sigma_p$. It is known that
$\sigma_p$ and $\sigma_{\tilde{p}}$ are related by
the fundamental relation \cite{O}:
\begin{align}
\sigma_p=\sigma_{\la}\sigma_{\tilde{p}}.
\end{align}
and there is a bijection between the sets $P$ and $\tilde{P}$. This implies that 
$\sum_ic_i$ in the standard form equals to 
$\sigma_{\la}\cdot \chi_{\rho}^{\la}$ mod $k$, which finishes the proof.
\end{proof}

Finally we discuss how to implement our iterative formulas using the SageMath program.

An algorithm design for Theorem \ref{t:MN-gensym} to display all coefficients of the MN rule is as follows:\\
Step 1) Given two colored partitions of length $k$ and weight $n$, named $\boldsymbol{\lambda}$ and $\boldsymbol{\rho}$;\\
Step 2) Assume the largest part (of weight $m$) in $\boldsymbol{\rho}$ be removed (to reduce the times of iterations), list all colored partitions of weight $n-m$ included in $\boldsymbol{\lambda}$;\\
Step 3) Filter out colored partitions $\boldsymbol{\lambda_j}$ such that skew partition $\boldsymbol{\lambda}-\boldsymbol{\lambda_j}$ is a rim hook, and compute the corresponding coefficient.

An algorithm outline of listing all coefficients for 
Theorem \ref{t:3.9} goes as follows.\\
Step 1) Given two colored partitions of length $k$ and weight $n$, named $\boldsymbol{\lambda}$ and $\boldsymbol{\rho}$;\\
Step 2) Identify and remove the largest part  in $\boldsymbol{\lambda}$ (of weight $s$), filter out sub-partitions $\boldsymbol{\mu}$ of $\boldsymbol{\rho}$ with weight less than or equal to $n-s$, and corresponding partitions $\tau$ of size $n-s-|\boldsymbol{\mu}|$, then compute the corresponding coefficients;\\
Step 3) List all possible colored partitions $\boldsymbol{\tau}$ corresponding $\tau$, also compute each of coefficient.
\medskip

To end this section, we list the character tables of $C_3\wr S_1$, $C_3\wr S_2$, $C_3\wr S_3$, we denote by $\omega$ the 3-rd primitive root of unity.

\begin{table}[H]
	
	\caption{$C_3\wr S_1$}

    \begin{tabular}{|c|c|c|c|}
		
		\hline
		
	    $\gamma\backslash c$ & $c_1$ & $c_2$ & $c_3$\\
		
		\hline
		
		$\gamma_1$ & 1 & 1 & 1 \\
		
		\hline
		
		$\gamma_2$ & $\omega$ & $\omega^2$ & 1\\
		
		\hline
		
		$\gamma_3$ & $\omega^2$ & $\omega$ & 1\\
		
		\hline
		
	\end{tabular}

\end{table}
where $\gamma_1=c_3=((1),\emptyset,\emptyset)$; $\gamma_2=c_2=(\emptyset,(1),\emptyset)$; $\gamma_3=c_1=(\emptyset,\emptyset,(1))$.

\begin{table}[H]
	
	\caption{$C_3\wr S_2$}
	
	\begin{tabular}{|c|c|c|c|c|c|c|c|c|c|}
		
		\hline
		
		$\gamma\backslash c$ & $c_1$ & $c_2$ & $c_3$ & $c_4$ & $c_5$ & $c_6$ & $c_7$ & $c_8$ & $c_9$\\
		
		\hline
		
		$\gamma_1$ & 1 & 1 & 1 & 1 & 1 & 1 & 1 & 1 & 1\\
		
		\hline
		
		$\gamma_2$ & 1 & -1 & 1 & 1 & -1 & 1 & 1 & 1 & -1\\
		
		\hline
		
		$\gamma_3$ & 2$\omega$ & 0 & -1 & 2$\omega^2$ & 0 & -$\omega^2$ & -$\omega$ & 2 & 0\\
		
		\hline
		
		$\gamma_4$ & 2$\omega^2$ & 0 & -1 & 2$\omega$ & 0 & -$\omega$ & -$\omega^2$ & 2 & 0\\
		
		\hline
		
		$\gamma_5$ & $\omega^2$ & $\omega$ & 1 & $\omega$ & $\omega^2$ & $\omega$ & $\omega^2$ & 1 & 1\\
		
		\hline
		
		$\gamma_6$ & $\omega^2$ & -$\omega$ & 1 & $\omega$ & -$\omega^2$ & $\omega$ & $\omega^2$ & 1 & -1\\
		
		\hline
		
		$\gamma_7$ & 2 & 0 & -1 & 2 & 0 & -1 & -1 & 2 & 0\\
		
		\hline
		
		$\gamma_8$ & $\omega$ & $\omega^2$ & 1 & $\omega^2$ & $\omega$ & $\omega^2$ & $\omega$ & 1 & 1\\
		
		\hline
		
		$\gamma_9$ & $\omega$ & -$\omega^2$ & 1 & $\omega^2$ & -$\omega$ & $\omega^2$ & $\omega$ & 1 & -1\\
		
		\hline
		
	\end{tabular}

\end{table}
where $\gamma_1=c_9=((2),\emptyset,\emptyset)$; $\gamma_2=c_8=((1,1),\emptyset,\emptyset)$; $\gamma_3=c_7=((1),(1),\emptyset)$; $\gamma_4=c_6=((1),\emptyset,(1))$; $\gamma_5=c_5=(\emptyset,(2),\emptyset)$; $\gamma_6=c_4=(\emptyset,(1,1),\emptyset)$; $\gamma_7=c_3=(\emptyset,(1),(1))$; $\gamma_8=c_2=(\emptyset,\emptyset,(2))$; $\gamma_9=c_1=(\emptyset,\emptyset,(1,1))$.

\begin{table}[H]
	
	\caption{$C_3\wr S_3$}
	\renewcommand\arraystretch{1.5}
	\resizebox{\textwidth}{!}{
		\begin{tabular}{|c|c|c|c|c|c|c|c|c|c|c|c|c|c|c|c|c|c|c|c|c|c|c|}
			
			\hline
			
			$\gamma\backslash c$  & $c_1$ & $c_2$ & $c_3$ & $c_4$ & $c_5$ & $c_6$ & $c_7$ & $c_8$ & $c_9$ & $c_{10}$ & $c_{11}$ & $c_{12}$ & $c_{13}$ & $c_{14}$ & $c_{15}$ & $c_{16}$ & $c_{17}$ & $c_{18}$ & $c_{19}$ & $c_{20}$ & $c_{21}$ & $c_{22}$\\
			
			\hline
			
			$\gamma_1$ & 1 & 1 & 1 & 1 & 1 & 1 & 1 & 1 & 1 & 1 & 1 & 1 & 1 & 1 & 1 & 1 & 1 & 1 & 1 & 1 & 1 & 1\\
			
			\hline
			
			$\gamma_2$ & 2 & 0 & -1 & 2 & 0 & 2 & 0 & 2 & 0 & -1 & 2 & 0 & 2 & 2 & 0 & 2 & 0 & 2 & 0 & 2 & 0 & -1\\
			
			\hline
			
			$\gamma_3$ & 1 & -1 & 1 & 1 & -1 & 1 & -1 & 1 & -1 & 1 & 1 & -1 & 1 & 1 & -1 & 1 & -1 & 1 & -1 & 1 & -1 & 1\\
			
			\hline
			
			$\gamma_4$ & 3$\omega$ & $\omega$ & 0 & $\omega$-1 & $\omega^2$ & $\omega^2$-1 & $\omega$ & 3$\omega^2$ & $\omega^2$ & 0 & $\omega-\omega^2$ & 1 & 0 & $\omega^2-\omega$ & 1 & 1-$\omega^2$ & $\omega$ & 1-$\omega$ & $\omega^2$ & 3 & 1 & 0\\
			
			\hline
			
			$\gamma_5$ & 3$\omega$ & -$\omega$ & 0 & $\omega$-1 & -$\omega^2$ & $\omega^2$-1 & -$\omega$ & 3$\omega^2$ & -$\omega^2$ & 0 & $\omega-\omega^2$ & -1 & 0 & $\omega^2-\omega$ & -1 & 1-$\omega^2$ & -$\omega$ & 1-$\omega$ & -$\omega^2$ & 3 & -1 & 0\\
			
			\hline
			
			$\gamma_6$ & 3$\omega^2$ & $\omega^2$ & 0 & $\omega^2$-1 & $\omega$ & $\omega$-1 & $\omega^2$ & 3$\omega$ & $\omega$ & 0 & $\omega^2-\omega$ & 1 & 0 & $\omega-\omega^2$ & 1 & 1-$\omega$ & $\omega^2$ & 1-$\omega^2$ & $\omega$ & 3 & 1 & 0\\
			
			\hline
			
			$\gamma_7$ & 3$\omega^2$ & -$\omega^2$ & 0 & $\omega^2$-1 & -$\omega$ & $\omega$-1 & -$\omega^2$ & 3$\omega$ & -$\omega$ & 0 & $\omega^2-\omega$ & -1 & 0 & $\omega-\omega^2$ & -1 & 1-$\omega$ & -$\omega^2$ & 1-$\omega^2$ & -$\omega$ & 3 & -1 & 0\\
			
			\hline
			
			$\gamma_8$ & 3$\omega^2$ & $\omega$ & 0 & 1-$\omega$ & $\omega$ & 1-$\omega^2$ & $\omega^2$ & 3$\omega$ & $\omega^2$ & 0 & $\omega$-1 & $\omega$ & 0 & $\omega^2$-1 & $\omega^2$ & $\omega-\omega^2$ & 1 & $\omega^2-\omega$ & 1 & 3 & 1 & 0\\
			
			\hline
			
			$\gamma_9$ & 3$\omega^2$ & -$\omega$ & 0 & 1-$\omega$ & -$\omega$ & 1-$\omega^2$ & -$\omega^2$ & 3$\omega$ & -$\omega^2$ & 0 & $\omega$-1 & -$\omega$ & 0 & $\omega^2$-1 & -$\omega^2$ & $\omega-\omega^2$ & -1 & $\omega^2-\omega$ & -1 & 3 & -1 & 0\\
			
			\hline
			
			$\gamma_{10}$ & 6 & 0 & 0 & 0 & 0 & 0 & 0 & 6 & 0 & 0 & 0 & 0 & -3 & 0 & 0 & 0 & 0 & 0 & 0 & 6 & 0 & 0\\
			
			\hline
			
			$\gamma_{11}$ & 3$\omega$ & $\omega^2$ & 0 & 1-$\omega^2$ & $\omega^2$ & 1-$\omega$ & $\omega$ & 3$\omega^2$ & $\omega$ & 0 & $\omega^2$-1 & $\omega^2$ & 0 & $\omega$-1 & $\omega$ & $\omega^2-\omega$ & 1 & $\omega-\omega^2$ & 1 & 3 & 1 & 0\\
			
			\hline
			
			$\gamma_{12}$ & 3$\omega$ & -$\omega^2$ & 0 & 1-$\omega^2$ & -$\omega^2$ & 1-$\omega$ & -$\omega$ & 3$\omega^2$ & -$\omega$ & 0 & $\omega^2$-1 & -$\omega^2$ & 0 & $\omega$-1 & -$\omega$ & $\omega^2-\omega$ & -1 & $\omega-\omega^2$ & -1 & 3 & -1 & 0\\
			
			\hline
			
			$\gamma_{13}$ & 1 & $\omega^2$ & $\omega$ & $\omega$ & 1 & $\omega^2$ & 1 & 1 & $\omega$ & $\omega^2$ & $\omega^2$ & $\omega$ & 1 & $\omega$ & $\omega^2$ & $\omega$ & $\omega$ & $\omega^2$ & $\omega^2$ & 1 & 1 & 1\\
			
			\hline
			
			$\gamma_{14}$ & 2 & 0 & -$\omega$ & 2$\omega$ & 0 & 2$\omega^2$ & 0 & 2 & 0 & -$\omega^2$ & 2$\omega^2$ & 0 & 2 & 2$\omega$ & 0 & 2$\omega$ & 0 & 2$\omega^2$ & 0 & 2 & 0 & -1\\
			
			\hline
			
			$\gamma_{15}$ & 1 & -$\omega^2$ & $\omega$ & $\omega$ & -1 & $\omega^2$ & -1 & 1 & -$\omega$ & $\omega^2$ & $\omega^2$ & -$\omega$ & 1 & $\omega$ & -$\omega^2$ & $\omega$ & -$\omega$ & $\omega^2$ & -$\omega^2$ & 1 & -1 & 1\\
			
			\hline
			
			$\gamma_{16}$ & 3$\omega$ & 1 & 0 & $\omega^2-\omega$ & $\omega^2$ & $\omega-\omega^2$ & $\omega$ & 3$\omega^2$ & 1 & 0 & 1-$\omega$ & $\omega$ & 0 & 1-$\omega^2$ & $\omega^2$ & $\omega$-1 & $\omega^2$ & $\omega^2$-1 & $\omega$ & 3 & 1 & 0\\
			
			\hline
			
			$\gamma_{17}$ & 3$\omega$ & -1 & 0 & $\omega^2-\omega$ & -$\omega^2$ & $\omega-\omega^2$ & -$\omega$ & 3$\omega^2$ & -1 & 0 & 1-$\omega$ & -$\omega$ & 0 & 1-$\omega^2$ & -$\omega^2$ & $\omega$-1 & -$\omega^2$ & $\omega^2$-1 & -$\omega$ & 3 & -1 & 0\\
			
			\hline
			
			$\gamma_{18}$ & 3$\omega^2$ & 1 & 0 & $\omega-\omega^2$ & $\omega$ & $\omega^2-\omega$ & $\omega^2$ & 3$\omega$ & 1 & 0 & 1-$\omega^2$ & $\omega^2$ & 0 & 1-$\omega$ & $\omega$ & $\omega^2$-1 & $\omega$ & $\omega$-1 & $\omega^2$ & 3 & 1 & 0\\
			
			\hline
			
			$\gamma_{19}$ & 3$\omega^2$ & -1 & 0 & $\omega-\omega^2$ & -$\omega$ & $\omega^2-\omega$ & -$\omega^2$ & 3$\omega$ & -1 & 0 & 1-$\omega^2$ & -$\omega^2$ & 0 & 1-$\omega$ & -$\omega$ & $\omega^2$-1 & -$\omega$ & $\omega$-1 & -$\omega^2$ & 3 & -1 & 0\\
			
			\hline
			
			$\gamma_{20}$ & 1 & $\omega$ & $\omega^2$ & $\omega^2$ & 1 & $\omega$ & 1 & 1 & $\omega^2$ & $\omega$ & $\omega$ & $\omega^2$ & 1 & $\omega^2$ & $\omega$ & $\omega^2$ & $\omega^2$ & $\omega$ & $\omega$ & 1 & 1 & 1\\
			
			\hline
			
			$\gamma_{21}$ & 2 & 0 & -$\omega^2$ & 2$\omega^2$ & 0 & 2$\omega$ & 0 & 2 & 0 & -$\omega$ & 2$\omega$ & 0 & 2 & 2$\omega^2$ & 0 & 2$\omega^2$ & 0 & 2$\omega$ & 0 & 2 & 0 & -1\\
			
			\hline
			
			$\gamma_{22}$ & 1 & -$\omega$ & $\omega^2$ & $\omega^2$ & -1 & $\omega$ & -1 & 1 & -$\omega^2$ & $\omega$ & $\omega$ & -$\omega^2$ & 1 & $\omega^2$ & -$\omega$ & $\omega^2$ & -$\omega^2$ & $\omega$ & -$\omega$ & 1 & -1 & 1\\
			
			\hline
			\end{tabular}}
\end{table}
where $\gamma_1=c_{22}=((3),\emptyset,\emptyset)$; $\gamma_2=c_{21}=((2,1),\emptyset,\emptyset)$; $\gamma_3=c_{20}=((1,1,1),\emptyset,\emptyset)$; $\gamma_4=c_{19}=((2),(1),\emptyset)$; $\gamma_5=c_{18}=((1,1),(1),\emptyset)$; $\gamma_6=c_{17}=((2),\emptyset,(1))$; $\gamma_7=c_{16}=((1,1),\emptyset,(1))$; $\gamma_8=c_{15}=((1),(2),\emptyset)$; $\gamma_9=c_{14}=((1),(1,1),\emptyset)$; $\gamma_{10}=c_{13}=((1),(1),(1))$; $\gamma_{11}=c_{12}=((1),\emptyset,(2))$; $\gamma_{12}=c_{11}=((1),\emptyset,(1,1))$; $\gamma_{13}=c_{10}=(\emptyset,(3),\emptyset)$; $\gamma_{14}=c_9=(\emptyset,(2,1),\emptyset)$; $\gamma_{15}=c_8=(\emptyset,(1,1,1),\emptyset)$; $\gamma_{16}=c_7=(\emptyset,(2),(1))$; $\gamma_{17}=c_6=(\emptyset,(1,1),(1))$; $\gamma_{18}=c_5=(\emptyset,(1),(2))$; $\gamma_{19}=c_4=(\emptyset,(1),(1,1))$; $\gamma_{20}=c_3=(\emptyset,\emptyset,(3))$; $\gamma_{21}=c_2=(\emptyset,\emptyset,(2,1))$; $\gamma_{22}=c_1=(\emptyset,\emptyset,(1,1,1))$.
\bigskip

Moreover, by Theorem 4.4, one has
\begin{table}[H]
	
	\caption{$S_9$ mod 3}
		\begin{center}
			\begin{tabular}{|c|c|c|c|}

\hline

$\lambda\backslash c$  & $(9)$ & $(6,3)$ & $(3,3,3)$\\

\hline
$(7,1,1)$ & 1 & 1 & 1\\

\hline
$(4,1,1,1,1,1)$ & 2 & 0 & 2\\

\hline
$(1,1,1,1,1,1,1,1,1)$ & 1 & 2 & 1\\

\hline
$(4,3,2)$ &0&2&0\\

\hline
$(2,2,2,1,1,1)$&0 &2&0\\

\hline
$(4,4,1)$ & 0& 1& 0\\

\hline
$(3,2,1,1,1,1)$ & 0&1&0\\

\hline
$(5,2,2)$ & 0&1&0\\

\hline
$(2,2,2,2,1)$ &0 &1 &0\\

\hline
$(3,3,3)$ & 0& 0& 0\\

\hline
$(6,2,1)$ & 0& 2& 0\\

\hline
$(3,2,2,2)$ &0 &2& 0\\

\hline
$(8,1)$ &2 &2 &2\\

\hline
$(5,1,1,1,1)$& 1 & 0 & 1 \\

\hline
$(2,1,1,1,1,1,1,1)$ &2 &1 &2\\

\hline
$(5,4)$ &0 &2 &0\\

\hline
$(3,3,1,1,1)$ &0 &2 &0\\

\hline
$(6,3)$& 0& 1 &0\\
\hline
$(3,3,2,1)$& 0 &1 &0\\
\hline
$(9)$ & 1 &1 &1\\
\hline
$(6,1,1,1)$ &2 &0 &2\\
\hline
$(3,1,1,1,1,1,1)$ &1 &2 &1\\
\hline
	\end{tabular}
\end{center}
\end{table}
where $\lambda$ corresponds to $\gamma_1\sim \gamma_{22}$. They are consistient with the elements in the character table of $S_9$ mod 3.

\medskip

\bigskip
\centerline{\bf Acknowledgments}
\medskip
The work is supported in part by
the Simons Foundation grant no. MP-TSM-00002518 and the National Natural Science Foundation of China grant nos.
12171303.

\newpage
\appendix
\begin{section}{The source code for Theorem \ref{t:MN-gensym}}
\begin{lstlisting}
# Take k=3, n=5

P=PartitionTuples(3,5)
import random
lam=random.choice(P)
rho=random.choice(P)
print(lam,rho)

# Set environment

m=max(max(rho))
j=rho.index(max(rho))
r=Zmod(3)
w=SR.var('w')

import numpy
def ht(nums):
    return numpy.count_nonzero(nums)-1

# Filter out lam satisfying conditions

for k in range(3):
    if lam[k].size()>=m:
    tem=Partitions(lam[k].size()-m,outer=lam[k]).list()

    for i in tem:
        if SkewPartition([lam[k],i]).is_ribbon():
            coff =
            (-1)^ht(SkewPartition([lam[k],i]).row_lengths())*w^r(-j*k)
            l=list(lam)
            l[k]=i
            print(coff,"new lam:",PartitionTuple(l))
\end{lstlisting}
\end{section}

\begin{section}{Theorem \ref{t:3.9}}
\begin{lstlisting}
# Take k=3, n=5

P=PartitionTuples(3,5)
import random
lam=random.choice(P)
rho=random.choice(P)
print(lam,rho)

# Set environment

s=max(max(lam))
j=lam.index(max(lam))
r=Zmod(3)
w=SR.var('w')

def sub(nums):
    res = [[]]
    for num in nums:
    res += [ i + [num] for i in res]
    return res

import itertools
array1 = sub(rho[0])
array2 = sub(rho[1])
array3 = sub(rho[2])
combs = itertools.product(array1,array2,array3)

# Simulate the path

import copy
class Solution:
    def subsets(self, nums):
        result = []
        path = []
        self.backtracking(nums, 0, path, result)
        return result

    def backtracking(self, nums, startIndex, path, result):
        result.append(path[:])
        for i in range(startIndex, len(nums)):
            path.append(nums[i])
            self.backtracking(nums, i + 1, path, result)
            path.pop()

# Filter out rho satisfying conditions

for comb in combs:
    if sum(comb[0])+sum(comb[1])+sum(comb[2])<=5-s:
        box0 = comb[0]
        box1 = comb[1]
        box2 = comb[2]
        taus=
        Partitions(5-s-sum(comb[0])-sum(comb[1])-sum(comb[2])).list()
        solu=Solution()
        result = {}

        for tau in taus:
            d=Partition(tau).centralizer_size()
            c=(-1/3)^len(tau)/d*w^r(-(len(rho[1])-
            len(comb[1]))*1*j-(len(rho[2])-len(comb[2]))*2*j)
            all_index_3 = list(range(len(tau)))
            all_index_1 = solu.subsets(all_index_3)

            for i_i in range(len(all_index_1)):
                index_2_3 = list(set(all_index_3)-set(all_index_1[i_i]))
                all_index_2_3 = solu.subsets(index_2_3)
                box0_i_index = copy.deepcopy(all_index_2_3)
                box1_i_index = copy.deepcopy(all_index_2_3)
                box2_i_index = copy.deepcopy(all_index_2_3)

                for i_i_i in range(len(all_index_2_3)):
                    box0_i_index[i_i_i] = list(set(all_index_3)-set(index_2_3))
                    box1_i_index[i_i_i] = list(set(index_2_3)-set(all_index_2_3[i_i_i]))

                    for k_0 in range(len(box0_i_index[i_i_i])):
                    box0_i_index[i_i_i][k_0] = tau[box0_i_index[i_i_i][k_0]]
                    for k_1 in range(len(box1_i_index[i_i_i])):
                    box1_i_index[i_i_i][k_1] = tau[box1_i_index[i_i_i][k_1]]
                    for k_2 in range(len(box2_i_index[i_i_i])):
                    box2_i_index[i_i_i][k_2] = tau[box2_i_index[i_i_i][k_2]]

                    coff =c*( w^r((len(box0_i_index[i_i_i]) * 0 + len(box1_i_index[i_i_i]) * 1 + len(box2_i_index[i_i_i]) * 2)*j))
                    x = box0 + box0_i_index[i_i_i]
                    y = box1 + box1_i_index[i_i_i]
                    z = box2 + box2_i_index[i_i_i]
                    tmp = list()
                    tmp.append(x)
                    tmp.append(y)
                    tmp.append(z)
                    result["rho"] = tmp
                    print(coff,"new rho:",result["rho"])
\end{lstlisting}
\end{section}
\end{document}